\numberwithin{equation}{section}
\definecolor{darkgreen}{RGB}{0,154,23}
\definecolor{darkbrown}{RGB}{183,60,18}
\begin{document}
	\newtheorem{theorem}{Theorem}
	\newtheorem{observation}[theorem]{Observation}
	\newtheorem{corollary}[theorem]{Corollary}
	\newtheorem{algorithm}[theorem]{Algorithm}
	\newtheorem{definition}[theorem]{Definition}
	\newtheorem{guess}[theorem]{Conjecture}
	
	\newtheorem{problem}[theorem]{Problem}
	\newtheorem{question}[theorem]{Question}
	\newtheorem{lemma}[theorem]{Lemma}
	\newtheorem{proposition}[theorem]{Proposition}
	\newtheorem{fact}[theorem]{Fact}
	
	\newtheorem{claim}{Claim}
	\newtheorem{claimA}{Claim}
	\renewcommand{\theclaimA}{1\alph{claimA}}
	\newtheorem{claimB}{Claim}
	\renewcommand{\theclaimB}{2\alph{claimB}}
	\newtheorem{claimC}{Claim}
	\renewcommand{\theclaimC}{3\alph{claimC}}
	
	\captionsetup[figure]{labelfont={bf},name={Fig.},labelsep=period}
	
	\makeatletter
	\newcommand\figcaption{\def\@captype{figure}\caption}
	\newcommand\tabcaption{\def\@captype{table}\caption}
	\makeatother

	\newtheorem{acknowledgement}[theorem]{Acknowledgement}
	
	\newtheorem{axiom}[theorem]{Axiom}
	\newtheorem{case}[theorem]{Case}
	\newtheorem{conclusion}[theorem]{Conclusion}
	\newtheorem{condition}[theorem]{Condition}
	\newtheorem{conjecture}[theorem]{Conjecture}
	\newtheorem{criterion}[theorem]{Criterion}
	\newtheorem{example}[theorem]{Example}
	\newtheorem{exercise}[theorem]{Exercise}
	\newtheorem{notation}{Notation}
	\newtheorem{solution}[theorem]{Solution}
	\newtheorem{summary}[theorem]{Summary}
	
	\newenvironment{proof}{\noindent {\bf
			Proof.}}{\rule{2.5mm}{2.5mm}\par\medskip}
	\newcommand{\remark}{\medskip\par\noindent {\bf Remark.~~}}
	
	\newcommand{\qed}{\null\nobreak\rule{0.6em}{0.6em}}

	\newcommand{\overbar}[1]{\mkern 1.5mu\overline{\mkern-1.5mu#1\mkern-1.5mu}\mkern 1.5mu}
	
	\let\svthefootnote\thefootnote
	\newcommand\freefootnote[1]{%
		\let\thefootnote\relax%
		\footnotetext{#1}%
		\let\thefootnote\svthefootnote%
	}

	\title{\large{\bf On Two problems of Defective Choosability of  Graphs}}

	\author{Jie Ma \thanks{School of Mathematical Sciences, University of Science and Technology of China, Hefei, Anhui, 230026, China. E-mail: jiema@ustc.edu.cn. Supported in part by the National Key R and D Program of China 2020YFA0713100, National Natural Science Foundation of China grants 12125106, Innovation Program for Quantum Science and Technology 2021ZD0302904, and Anhui Initiative in Quantum Information Technologies grant AHY150200.} \qquad  Rongxing Xu \thanks{School of Mathematical Sciences, University of Science and Technology of China, Hefei, Anhui, 230026, China and School of Mathematical Sciences, Zhejiang Normal University, Jinhua, Zhejiang, 321000, China. E-mail: xurongxing@ustc.edu.cn. Supported by Anhui Initiative in Quantum Information Technologies grant  AHY150200. Supported also by NSFC grant 11871439.} \qquad   Xuding Zhu \thanks{School of Mathematical Sciences, Zhejiang Normal University, Jinhua, Zhejiang, 321000, China. E-mail: xdzhu@zjnu.edu.cn. Grant Number: NSFC 11971438, U20A2068.}}

	\maketitle
	
	\begin{abstract}
		Given positive integers $p \ge k$, and a non-negative integer $d$, we say a graph $G$ is $(k,d,p)$-choosable if for every list assignment $L$ with $|L(v)|\geq  k$ for each $v \in  V(G)$ and $|\bigcup_{v\in V(G)}L(v)| \leq p$, there exists an $L$-coloring of $G$ such that each monochromatic subgraph has maximum degree at most $d$. In particular,  $(k,0,k)$-choosable  means $k$-colorable, $(k,0,+\infty)$-choosable  means $k$-choosable and $(k,d,+\infty)$-choosable means $d$-defective $k$-choosable. This paper proves that there are 1-defective 3-choosable planar graphs that are not 4-choosable, and for any positive integers $\ell \geq k \geq 3$, and non-negative integer $d$, there are $(k,d, \ell)$-choosable graphs that are not $(k,d , \ell+1)$-choosable. These results answer questions asked by Wang and Xu [SIAM J. Discrete Math. 27, 4(2013), 2020-2037], and  Kang [J. Graph Theory 73, 3(2013), 342-353], respectively. Our construction of $(k,d, \ell)$-choosable but not $(k,d , \ell+1)$-choosable graphs generalizes the construction  of Kr\'{a}l' and Sgall in [J. Graph Theory 49, 3(2005), 177-186] for the case $d=0$.
	\end{abstract}
	
	\section{Introduction}
	
	A coloring of a graph $G$ is a mapping $\phi$ which assigns to each vertex $v$ a color. The \emph{defect} of a vertex $v$, denoted by $\lambda_G(v,\phi)$, is the number of neighbors of $v$ which have the same color as $v$.
	A coloring $\phi$ is \emph{$d$-defective}  if $\lambda_G(v,\phi) \leq d$ for each vertex $v \in V(G)$. A $0$-defective coloring is also called a proper coloring.

	Assume $G$ is a graph and $f:V(G)\rightarrow \mathbb{N}^+$ is a mapping.  An \emph{$f$-list assignment} of $G$ is a list assignment $L$ of $G$ which assigns to each vertex $v$ a set $L(v)$ of $f(v)$ colors. Given a list assignment $L$ and nonnegative integer $d$, a {\em $d$-defective $L$-coloring $\phi$}    of $G$ is a $d$-defective coloring $\phi$ of $G$ such that $\phi(v) \in L(v)$  for each vertex $v$. We say $G$ is {\em $d$-defective $f$-choosable} if $G$ has a $d$-defective $L$-coloring for any $f$-list assignment $L$.
	
	Given a mapping $f:V(G)\rightarrow \mathbb{N}^+$ and two nonnegative integers $d, p$, we say   $G$ is \emph{$(f,d,p)$-choosable} if for any $f$-list assignment $L$ with $|\bigcup_{v\in V(G)}L(v)| \leq p$, there exists a $d$-defective $L$-coloring of $G$. In particular,
	$(k,0,k)$-choosable is called \emph{$k$-colorable},   $(k,0,+\infty)$-choosable is called  \emph{$k$-choosable},  $(k,d,k)$-choosable is called \emph{$d$-defective $k$-colorable},  $(k,d,+\infty)$-choosable is called \emph{$d$-defective $k$-choosable}.

	Defective coloring of planar graphs was first studied by Cowen, Cowen and Woodall~\cite{CCW1986}.
	They proved that every outerplanar graph is 2-defective 2-colorable and that every planar graph is 2-defective 3-colorable. These results were strengthened to defective list coloring by  
	Eaton and Hull~\cite{EH1999} and \v{S}krekovski~\cite{Riste1999}  independently. They proved that every planar graph is 2-defective 3-choosable and every outerplanar graph is 2-defective 2-choosable. 
	
	These results motivated some problems on the relation among defective colorability, defective choosability and choosability of planar graphs. 
	It is known that  there are $4$-choosable planar graphs that are not $1$-defective $3$-colorable \cite{WX2013} (hence not $1$-defective 3-choosable).   Wang and Xu \cite{WX2013} asked the following.
	\begin{question}\label{Q3}
		Is every $1$-defective $3$-choosable graph $4$-choosable?
	\end{question}
	
	This paper gives a negative answer to this question in a stronger form as following.
	
	\begin{theorem}\label{main-th1}
		There are  1-defective 3-choosable planar graphs that are   4-choosable.
	\end{theorem}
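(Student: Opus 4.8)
The plan is to prove this existence statement by exhibiting a single planar graph that is simultaneously $1$-defective $3$-choosable and $4$-choosable. Since the claim asks only for the existence of one such graph, the strategy is to reduce both required properties to ordinary $3$-choosability, and then to point to any $3$-choosable planar graph as the witness.

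First I would isolate two elementary monotonicity facts. The first is that every proper $L$-coloring is in particular a $1$-defective $L$-coloring, because a proper coloring assigns defect $0$ to every vertex and $0 \le 1$; consequently any $3$-choosable graph is automatically $1$-defective $3$-choosable. The second is that choosability is preserved under enlarging the lists: given any $4$-list assignment $L$, delete one color from each $L(v)$ to obtain a $3$-list assignment $L'$, and note that a proper $L'$-coloring is already a proper $L$-coloring. Hence every $3$-choosable graph is also $4$-choosable. Combining the two observations, every $3$-choosable graph lies in the desired class.

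It then remains only to produce a $3$-choosable planar graph. The cleanest witness is a path $P_n$ (indeed any tree): processing the vertices in order from one endpoint and coloring greedily, each vertex other than the first has exactly one already-colored neighbor, so a list of size $2 \le 3$ always contains an available color. Thus $P_n$ is $2$-choosable, hence $3$-choosable, and it is obviously planar. By the reduction above, $P_n$ is both $1$-defective $3$-choosable and $4$-choosable, which establishes the statement.

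There is no genuine obstacle in this argument: the entire content lies in the two monotonicity directions, and the only point requiring care is to state them correctly—relaxing the defect bound from $0$ to $1$, and shrinking each list from size $4$ to size $3$, each only make a coloring easier to find. I would keep the witness fully explicit, verifying its $3$-choosability by the greedy argument above, so that the proof is self-contained and does not need to invoke any external planar-choosability theorem such as Thomassen's.
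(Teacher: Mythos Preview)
Your argument is logically sound for the statement exactly as printed, but you have been misled by a typo. The theorem as stated is trivially true for the reasons you give; however, the abstract, the surrounding discussion (``This paper gives a negative answer to this question''), and the paper's own proof all make clear that the intended statement is that there are $1$-defective $3$-choosable planar graphs that are \emph{not} $4$-choosable. That is the content needed to answer Wang and Xu's question in the negative, and it is what the paper actually establishes.

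The paper's proof is therefore doing something entirely different from your proposal. It takes Gutner's gadget $T$ and the graph $T(k)$ obtained by gluing $k$ copies of $T$ at the two poles $u,v$. It is already known that $T(k)$ is not $4$-choosable for $k \ge 16$. The new work is to show that $T(k)$ is $1$-defective $3$-choosable for every $k \le 26$, so any $16 \le k \le 26$ gives the desired example. This is done via two lemmas analyzing $1$-defective colorings of a single copy of $T$ with $u,v$ precolored: Corollary~\ref{lem-notbad1} handles all ``non-tight'' precolorings with zero defect at both poles, and Lemma~\ref{lem-uv} shows that even in the tight case one can keep the defect at one chosen pole equal to zero. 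A pigeonhole count over the nine color pairs $(\alpha,\beta)\in L(u)\times L(v)$ then guarantees that at most two copies of $T$ are tight for some pair, and the two lemmas combine to color all of $T(k)$ with total defect at most $1$ at each of $u$ and $v$.

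So while nothing you wrote is wrong, it does not address the theorem the paper is proving. You should revisit the problem with the word ``not'' inserted before ``$4$-choosable'' and attempt the non-trivial direction.
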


	\medskip
	
	By definition, for any $d\geq 0$, if $G$ is $(k,d,+\infty)$-choosable, then it is $(k,d,p)$-choosable for any $p \geq k$.   Kr\'{a}l' and Sgall \cite{KS2005} showed that for each $\ell \geq k \geq 3$, there exists a $(k,0,\ell)$-choosable graph which is not $(k,0,\ell+1)$-choosable.  Kang \cite{Kang2013}  asked  the following question.

	\begin{question}
		\label{Q-K}
		Given positive integers $k,d$, does there exist an integer $\ell_{k,d}$ such that every $(k,d, \ell_{k,d})$-choosable graph is $(k,d,+\infty)$-choosable?
	\end{question}
	
	Our second result answers this question in negative  {for   $k \geq 3$, which generalizes the construction of Kr\'{a}l' and Sgall \cite{KS2005} to the cases of $d \geq 0$.}
	
	\begin{theorem}\label{main-th2}
		For any integers $d \geq 0$ and   $\ell \geq k \geq 3$, there exists a $(k,d,\ell)$-choosable graph which is not $(k,d,\ell+1)$-choosable.
	\end{theorem}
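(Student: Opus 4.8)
To prove Theorem~\ref{main-th2}, the plan is to realise the required graph as a blow-up of the Kr\'{a}l'--Sgall graph, chosen so that the case $d=0$ literally reproduces their construction. Fix $\ell\ge k\ge 3$, let $H=H_{k,\ell}$ be the graph of \cite{KS2005} that is $(k,0,\ell)$-choosable but not $(k,0,\ell+1)$-choosable, and fix a $k$-list assignment $L_0$ of $H$ with $|\bigcup_{v\in V(H)}L_0(v)|\le \ell+1$ that admits no proper $L_0$-colouring. Set $N:=dk+1$ and let $G=G_{k,d,\ell}$ be obtained from $H$ by replacing each vertex $v$ with an independent set $S_v$ of size $N$ and joining $S_v$ to $S_w$ completely whenever $vw\in E(H)$. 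Since $N=1$ when $d=0$, in that case $G=H$, so $G_{k,d,\ell}$ generalises the Kr\'{a}l'--Sgall graph; it remains to show that $G$ is not $(k,d,\ell+1)$-choosable and that $G$ is $(k,d,\ell)$-choosable.

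For the first assertion I would assign to every vertex of $S_v$ the list $L_0(v)$, an assignment using at most $\ell+1$ colours, and suppose for contradiction that $\phi$ is a $d$-defective colouring of $G$ from these lists. For each $v$ put $U_v:=\{c: |\phi^{-1}(c)\cap S_v|>d\}$. Since $|S_v|=dk+1$ and each list on $S_v$ is a $k$-subset of $L_0(v)$, pigeonhole gives a colour of $L_0(v)$ used at least $d+1$ times on $S_v$, so $U_v\cap L_0(v)\neq\emptyset$. Also, if a colour $c$ occurs on $S_v$ and $vw\in E(H)$ then $c\notin U_w$, for otherwise a vertex of $S_v$ coloured $c$ would have more than $d$ neighbours in $S_w$ of colour $c$. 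In particular $U_v\cap U_w=\emptyset$ for every edge $vw$ of $H$, so picking $\psi(v)\in U_v\cap L_0(v)$ yields a proper $L_0$-colouring of $H$ --- a contradiction. Hence $G$ is not $(k,d,\ell+1)$-choosable.

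The heart of the matter is the second assertion. Here I would attempt the dual of the argument above. Given any $k$-list assignment $L$ of $G$ with $|\bigcup_{u\in V(G)}L(u)|\le\ell$, for each $v$ let $T(v)$ consist of the $k$ colours lying in the most lists among $\{L(u):u\in S_v\}$; a short averaging computation shows that every colour of $T(v)$ lies in at least $N/(\ell-k+1)$ of these lists. Since $H$ is $(k,0,\ell)$-choosable there is a proper colouring $\psi$ of $H$ with $\psi(v)\in T(v)$ for every $v$; colour each $u\in S_v$ with $\psi(v)$ whenever $\psi(v)\in L(u)$. This partial colouring is proper (because $\psi$ is), and it leaves at most $N\cdot\frac{\ell-k}{\ell-k+1}$ vertices uncoloured in each $S_v$, so one is tempted to iterate on the uncoloured parts.

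The obstacle --- which I expect to be the crux of the whole proof --- is that the lists inside a cluster $S_v$ need not agree, so a colour that $\psi$ reuses on two adjacent clusters in two different rounds of such an iteration can create a monochromatic degree much larger than $d$, and simply forbidding previously used colours fails once $H$ is dense. The way I would get around this is not to treat the iteration as a black box but to run it alongside Kr\'{a}l' and Sgall's own proof that $H$ is $(k,0,\ell)$-choosable: their graph is assembled from a bounded number of large modules, and after the inflation by $N=dk+1$ each module contributes at most $d$ genuinely problematic vertices at any stage, precisely what the defect budget can absorb. The decisive and most delicate step is then the bookkeeping which certifies that no vertex ever ends up with more than $d$ monochromatic neighbours; the rest is routine. (In the range $\ell\le 2k-2$, where $H$ may be taken complete multipartite, the iteration can be dispensed with entirely: assigning to the vertices of each part of $H$ the distinct lists occurring on the corresponding inflated part of $G$ and applying $(k,0,\ell)$-choosability yields pairwise disjoint colour sets, one per part, and colouring each inflated part from its set gives a $0$-defective $L$-colouring; this indicates the shape the general reduction should take.)
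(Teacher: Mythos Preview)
Your pigeonhole argument that the blow-up $G$ is not $(k,d,\ell+1)$-choosable is correct and pleasant. The gap is entirely in the other direction, and it is not just missing bookkeeping: the blow-up need not be $(k,d,\ell)$-choosable at all. Take $k=3$, $d=1$, $\ell=4$, so $N=dk+1=4$. The graph $K_3$ is certainly $(3,0,4)$-choosable, and its blow-up is $K_{4,4,4}$. Assign to the $i$th vertex of each part the list $[4]\setminus\{i\}$. In any $1$-defective colouring every colour class induces a subgraph of maximum degree at most~$1$; in $K_{4,4,4}$ such a set either lies in a single part or has at most one vertex in each of two parts, and since vertex~$i$ of a part cannot receive colour~$i$, a colour class confined to one part has size at most~$3$. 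Four colour classes of total size $12$ then force every class to have size exactly $3$ and to lie in one part, but four such classes cannot fit into three parts of size~$4$. Hence $K_{4,4,4}$ is not $(3,1,4)$-choosable. This already kills your ``easy'' range $\ell\le 2k-2$: even when $H$ is complete multipartite, inflating by $dk+1$ can destroy $(k,d,\ell)$-choosability, and your sketch there (``assign the distinct lists on the inflated part to the vertices of the part of $H$'') does not make sense, since the number of distinct lists need not match the part size. So any salvage of the blow-up idea would have to exploit the \emph{global} structure of the specific Kr\'al'--Sgall graph and cannot proceed part by part; you have not done this, and it is not clear it can be done.

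The paper does not go through a blow-up. It builds the graph explicitly as a tower: a central $K_{k(d+1)}$, then $k$ satellite cliques $K_{(k-1)(d+1)}$ joined to a fixed vertex, then for every $(k-1)$-subset of each satellite a family of $t=\ell-k+2$ cliques $K_{d+1}$, and finally a gadget $H(t,d,k)$ (built from $C_{2s+1}\ast(2d+2)$) attached to each such family. The $(k,d,\ell)$-choosability is then proved by extending a $d$-defective colouring layer by layer via degree inequalities (Lemma~\ref{lem-extend}) together with a dedicated extension lemma for the gadget (Lemma~\ref{lem-Htdk}); the key point is that with only $\ell$ colours in play, two of the $t$ designated vertices in each gadget must receive the same colour, which unlocks the extension. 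The defect is thus controlled locally at every stage rather than by any global iteration.
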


	\section{Proof of Theorem \ref{main-th1}}\label{sec1}

	The gadget graph $T$ depicted in Figure \ref{fig-T} was constructed by Gutner \cite{Gutner1996} (see Fig. \ref{fig-T}) and used in the construction of many counterexamples for several topics related to list coloring of planar graphs \cite{Simith2022,Voigt1998,VW1997,XZ2022}.
	
	\begin{figure}[H]
		\centering  
		\begin{tikzpicture}[>=latex,	
			roundnode/.style={circle, draw=black,fill= magenta, minimum size=1.2mm, inner sep=0pt}]
			\node [roundnode] (u) at (0,2){};
			\node [roundnode] (z) at (0,0){};
			\node [roundnode] (u2) at (1.2,1.6/3){};	
			\node [roundnode] (u1) at (-1.2,1.6/3){};
			\node [roundnode] (v2) at (1.2,-1.6/3){};	
			\node [roundnode] (v1) at (-1.2,-1.6/3){};
			\node [roundnode] (v) at (0,-2){};
			\node [roundnode] (x) at (-8/3,0){};
			\node [roundnode] (y) at (8/3,0){};	
			
			\node at (0,2.2){$u$};
			\node at (0,-2.2){$v$};
			
			\node at (-0.85, -0.6){$v_1$};
			\node at (0.85, -0.6){$v_2$};
			\node at (-0.85,0.55){$u_1$};
			\node at (0.85, 0.6){$u_2$};
			
			\node at (0.15, 0.2){$z$};
			
			\node at (-2.7, -0.3){$x$};
			\node at (2.7, -0.3){$y$};
			
			\draw  (u)--(x)--(u1)--(z)--(u2)--(y)--(u)--(u2);
			\draw  (v1)--(u1)--(u)--(z);
			\draw  (u)--(z)--(v);
			\draw  (v)--(x)--(v1)--(z)--(v)--(v2)--(u2);
			\draw  (v1)--(v)--(y)--(v2);
			\draw  (z)--(v2);
		\end{tikzpicture} 
		\caption{The gadget graph $T$ in \cite{Gutner1996}.}
		\label{fig-T}
	\end{figure}
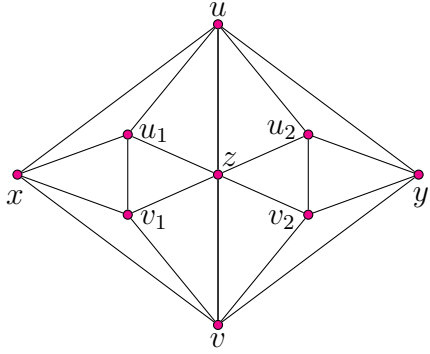
	
	For a positive integer $k$, let $T(k)$ be the graph obtained from the disjoint union of $k$ copies of $T$ by identifying all the copies of top vertex, denoted still by $u$, and identifying all the copies of the bottom vertex, denoted still by $v$. 
	It is known    \cite{Gutner1996,VW1997} that for any $k \geq 16$,  $T(k)$ is not $4$-choosable. 
	
	To prove Theorem \ref{main-th1}, it suffices to show that   if $k \leq 26$, then $T(k)$ is $1$-defective $3$-choosable. 
	
	We first construct some $1$-defective $3$-colorings for   some special list assignments of $T$. We assume $T$ are labeled as in Fig.\ref{fig-T}. 
	
	\begin{lemma}
		\label{lem-added}
		Let $H=T-\{u,v\}$, and $f(w)=2$ for $w \in V(H)\setminus \{x,y,z\}$ and $f(x),f(y), f(z) \ge 1$. If one of $f(x),f(y),f(z) =2$, then $H$ is $1$-defective $f$-choosable.
	\end{lemma}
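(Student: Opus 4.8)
The plan is to exploit the very restricted structure of $H$. Using the labels of Fig.~\ref{fig-T}, one checks directly that $H$ is the union of two triangles, $H_1$ on $\{x,u_1,v_1\}$ and $H_2$ on $\{y,u_2,v_2\}$, together with the vertex $z$ whose neighbourhood is exactly $\{u_1,v_1,u_2,v_2\}$; there is no edge between $H_1$ and $H_2$, so the two triangles interact only through $z$. Accordingly I would first colour $z$ and then colour $H_1$ and $H_2$ independently, the only global bookkeeping being how many neighbours of $z$ end up with $z$'s colour.

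The core step concerns a single triangle with $z$ attached. Fix a colour $c$ (to be the colour of $z$) and lists with $|L(x)|\ge 1$ and $|L(u_1)|=|L(v_1)|=2$. I claim there is a colouring of $\{x,u_1,v_1\}$ from these lists under which, with $z$ regarded as coloured $c$, each of $x,u_1,v_1$ has at most one neighbour of its own colour and neither $u_1$ nor $v_1$ receives $c$, \emph{unless} $L(x)=\{p\}$ and $L(u_1)=L(v_1)=\{c,p\}$ for some $p\neq c$; in that exceptional configuration there is instead a colouring under which $x,u_1,v_1$ still each have at most one neighbour of their own colour but exactly one of $u_1,v_1$ receives $c$. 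To prove this, look for $\alpha\in L(x)$, $\gamma_1\in L(u_1)\setminus\{c\}$, $\gamma_2\in L(v_1)\setminus\{c\}$ that are not all equal and set $x=\alpha$, $u_1=\gamma_1$, $v_1=\gamma_2$: since $\gamma_1,\gamma_2\neq c$, a short check splitting on whether $\gamma_1=\gamma_2$ shows each of $x,u_1,v_1$ has at most one neighbour of its own colour (and $z$ gains no monochromatic edge here). Such a triple fails to exist only when $L(x)$, $L(u_1)\setminus\{c\}$ and $L(v_1)\setminus\{c\}$ all equal one and the same singleton $\{p\}$, which, because $|L(u_1)|=|L(v_1)|=2$, is exactly the exceptional configuration; there one takes $x=p$, $u_1=c$, $v_1=p$ and checks directly. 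By the symmetry of $H$ that swaps $H_1$ and $H_2$, the same holds with $y,u_2,v_2$ in place of $x,u_1,v_1$.

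To finish, say $H_i$ is \emph{$c$-bad} if the relevant triple is in the exceptional configuration relative to $c$. For any $c\in L(z)$, colour $z=c$ and colour each $H_i$ by the core step, so that $z$ has no same-coloured neighbour inside a triangle that is not $c$-bad and exactly one inside a $c$-bad triangle. Then $z$ has at most two same-coloured neighbours, and two only if both $H_1$ and $H_2$ are $c$-bad, so it suffices to pick $c\in L(z)$ for which at most one of $H_1,H_2$ is $c$-bad. Here the hypothesis enters. If $f(x)=2$ (or $f(y)=2$) then $H_1$ (resp.\ $H_2$) is never $c$-bad, since being $c$-bad forces $|L(x)|=1$, and any $c\in L(z)$ works. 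And for each $i$ there is at most one colour $c$ making $H_i$ be $c$-bad (namely, when $L(u_i)=L(v_i)$ is a $2$-set containing the unique colour of $L(x)$, the other element of that $2$-set), so the set of colours making both triangles $c$-bad has size at most one, and when $f(z)=2$ we may choose $c\in L(z)$ outside it. In every case we get a colour for $z$ making the resulting colouring of $H$ $1$-defective.

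The only step involving any genuine choice — and the only place the hypothesis is needed — is this last one: a careless choice of $z$'s colour can force both $H_1$ and $H_2$ into their exceptional configurations simultaneously, and then $z$ necessarily has two neighbours of its own colour, so the extra colour promised by ``one of $f(x),f(y),f(z)=2$'' is genuinely used. Everything else reduces to the short finite verifications in the core step and the exceptional case.
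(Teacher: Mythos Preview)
Your proof is correct and takes a genuinely different route from the paper's. The paper simply writes down, in each of the two cases ($f(x)=2$ or $f(z)=2$, the case $f(y)=2$ being symmetric), an explicit greedy order in which to colour the seven vertices of $H$ and then verifies directly that the result is $1$-defective; it never isolates the two-triangles-plus-hub structure you use.

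Your approach is more structural: you decompose $H$ as the two triangles $H_1=\{x,u_1,v_1\}$ and $H_2=\{y,u_2,v_2\}$ linked only through $z$, prove one reusable ``core step'' about colouring a single such triangle relative to a fixed colour $c$ at $z$, identify the unique obstruction (the $c$-bad configuration), and then reduce the whole lemma to choosing $c\in L(z)$ so that at most one triangle is $c$-bad. This buys you a uniform argument covering all three sub-cases at once and a clear explanation of exactly where the hypothesis ``one of $f(x),f(y),f(z)=2$'' is consumed. The paper's approach is shorter on the page, since it skips the structural analysis and just exhibits the colourings, but yours makes the mechanism more transparent.
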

	\begin{proof}
		Assume $f(y)=f(z)=1$ and $f(x)=2$ and $L$ is an $f$-list assignment of $H$. Let $\phi(y), \phi(z)$ be the unique color in $L(y), L(z)$, respectively. Let $\phi(u_2) \in L(u_2) - \{\phi(y)\}$, $\phi(v_2) \in L(v_2)-\{\phi(z)\}$, $\phi(u_1) \in L(u_1) -\{\phi(z)\}, \phi(v_1) \in L(v_1)-\{\phi(z)\}$ and $\phi(x) \in L(x) - \{\phi(u_1)\}$. It is straightforward to verify that $\phi$ is a 1-defecitve $L$-coloring of $H$.
		
		Assume $f(z)=2$ and $f(x)=f(y)=1$ and $L$ is an $f$-list assignment of $H$. Let $\phi(x), \phi(y)$ be the unique color in $L(x), L(y)$, respectively.
		Let $\phi(u_1) \in L(u_1) - \{\phi(x)\}$, $\phi(z) \in L(z)-\{\phi(u_1)\}$, $\phi(v_1) \in L(v_1)-\{\phi(z)\}$, $\phi(u_2) \in L(u_2) - \{\phi(z)\}$ and $\phi(v_2) \in L(v_1) - \{\phi(y)\}$. Again it is straightforward to verify that $\phi$ is a 1-defective $L$-coloring of $H$.
	\end{proof}

	\begin{corollary}\label{lem-notbad1}
		Let $L$ be a list assignment of $T$ with $L(u)=\{\alpha\}$, $L(v)=\{\beta\}$ and $|L(w)|\geq 3$ for
		$w \in V(T)\setminus \{u,v\}$. If $\alpha = \beta$, or $\alpha \neq \beta$ and
		$\{\alpha, \beta\} \not\subseteq L(x) \cap  L(y) \cap L(z)$, or $\alpha \ne \beta$ and $ L(x) \cap  L(y) \cap L(z)-\{\alpha, \beta\} \ne \emptyset$, then $T$ has a $1$-defective $L$-coloring $\phi$ such that $\lambda_T(u,\phi)=\lambda_T(v,\phi) = 0$.
	\end{corollary}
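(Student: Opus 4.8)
The plan is to find a $1$-defective coloring of $H = T-\{u,v\}$ and extend it to $T$ by setting $\phi(u)=\alpha$ and $\phi(v)=\beta$. In $T$ the vertex $u$ is adjacent exactly to $x,y,z,u_1,u_2$ and $v$ is adjacent exactly to $x,y,z,v_1,v_2$ (in particular, $u$ and $v$ are not adjacent), so if the coloring of $H$ makes every neighbor of $u$ avoid $\alpha$ and every neighbor of $v$ avoid $\beta$, then $\lambda_T(u,\phi)=\lambda_T(v,\phi)=0$ and, moreover, each vertex of $H$ has the same defect in $T$ as in $H$; hence $\phi$ is then a $1$-defective $L$-coloring of $T$ with the required property. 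To build such a coloring of $H$, I would work with the list assignment $L'$ on $H$ given by $L'(w)=L(w)\setminus\{\alpha,\beta\}$ for $w\in\{x,y,z\}$ and $L'(u_i)=L(u_i)\setminus\{\alpha\}$, $L'(v_i)=L(v_i)\setminus\{\beta\}$ for $i=1,2$; then $|L'(u_i)|,|L'(v_i)|\ge 2$ and $|L'(x)|,|L'(y)|,|L'(z)|\ge 1$, and any $1$-defective $L'$-coloring of $H$ suffices.

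For the cases $\alpha=\beta$, and $\alpha\neq\beta$ with $\{\alpha,\beta\}\not\subseteq L(x)\cap L(y)\cap L(z)$, the first step is to locate some $w_0\in\{x,y,z\}$ with $|L'(w_0)|\ge 2$. If $\alpha=\beta$ then $|L'(w)|=|L(w)\setminus\{\alpha\}|\ge 2$ for all three vertices; if $\alpha\neq\beta$ and $\{\alpha,\beta\}\not\subseteq L(w_0)$ for some $w_0$, then $|L(w_0)\cap\{\alpha,\beta\}|\le 1$ and hence $|L'(w_0)|\ge 3-1=2$. Now shrink the lists on $u_1,u_2,v_1,v_2$ and on $w_0$ to size exactly $2$ and those on the other two of $x,y,z$ to size $1$; Lemma \ref{lem-added} then produces a $1$-defective coloring for these shrunk lists, which is in particular a $1$-defective $L'$-coloring of $H$, finishing these cases.

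In the remaining case, $\alpha\neq\beta$ and there is a color $\gamma\in\big(L(x)\cap L(y)\cap L(z)\big)\setminus\{\alpha,\beta\}$, Lemma \ref{lem-added} need not apply (possibly $|L'(x)|=|L'(y)|=|L'(z)|=1$), so I would color $H$ directly. Since $\{x,y,z\}$ is independent in $H$, set $\phi(x)=\phi(y)=\phi(z)=\gamma$. Each $u_i$ is adjacent in $T$ to $u$, to two $\gamma$-colored vertices among $x,y,z$, and to $v_i$, so choosing $\phi(u_i)\in L(u_i)\setminus\{\alpha,\gamma\}$ (a nonempty set) leaves $v_i$ as the only possible monochromatic neighbor of $u_i$; symmetrically choose $\phi(v_i)\in L(v_i)\setminus\{\beta,\gamma\}$ (again nonempty). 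As $u_1v_1$ and $u_2v_2$ are the only edges of $H$ among $\{u_1,u_2,v_1,v_2\}$, every vertex of $H$ has defect at most $1$, the vertices $x,y,z$ have defect $0$, and $\lambda_T(u,\phi)=\lambda_T(v,\phi)=0$, as required.

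The step that takes the most care is the verification that the edges joining $u$ and $v$ to $H$ never push any defect above $1$ — this is exactly why the lists of $x,y,z$ are shrunk by the whole set $\{\alpha,\beta\}$ while those of the $u_i$ (resp. $v_i$) are shrunk only by $\alpha$ (resp. $\beta$). There is no genuine obstacle here: the one configuration excluded by the hypothesis is $L(x)\cap L(y)\cap L(z)=\{\alpha,\beta\}$ with $\alpha\neq\beta$, which is precisely the situation where neither the reduction to Lemma \ref{lem-added} nor the direct coloring above is available.
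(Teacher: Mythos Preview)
Your proof is correct and follows essentially the same approach as the paper: reduce to Lemma~\ref{lem-added} via the shrunk list assignment $L'$ in the first two cases, and in the third case color $x,y,z$ with the common color $\gamma$ and pick $\phi(u_i)\in L(u_i)\setminus\{\alpha,\gamma\}$, $\phi(v_i)\in L(v_i)\setminus\{\beta,\gamma\}$. The only difference is that you spell out more of the structural verification (adjacencies in $T$, why $|L'(w_0)|\ge 2$) than the paper does.
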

	\begin{proof}
		Let $L'$ be the list assignment of $H=T-\{u,v\}$ defined as $L'(w) = L(w) - \{\alpha, \beta\}$ if $w \in \{x,y,z\}$,
		and $L'(w)=L(w)-\{\alpha\}$ for $w \in \{u_1,u_2\}$ and $L'(w)=L(w)-\{\beta\}$ for $w \in \{v_1, v_2\}$. If  $\alpha = \beta$ or
		$\{\alpha, \beta\} \not\subseteq L(x) \cap  L(y) \cap L(z)$, then it follows  from
		Lemma \ref{lem-added} that $H$ has a 1-defective $L'$-coloring $\phi$. Extend $\phi$ to $T$ by letting $\phi(u)=\alpha$ and $\phi(v)=\beta$,
		then $\phi$ is a $1$-defective $L$-coloring $\phi$ such that $\lambda_T(u,\phi)=\lambda_T(v,\phi)= 0$.
		
		If $\alpha \ne \beta$ and $ L(x) \cap  L(y) \cap L(z)-\{\alpha, \beta\} \ne \emptyset$, say $c \in L(x) \cap  L(y) \cap L(z)-\{\alpha, \beta\}$, then let $\phi(x)=\phi(y)=\phi(z)=c$, $\phi(w) \in L(w) - \{\alpha, c\}$ for $w \in \{u_1,u_2\}$ and $\phi(w) \in L(w) - \{\beta, c\}$ for $w \in \{v_1,v_2\}$.
		It is straightforward to verify that  $\phi$ is a $1$-defective $L$-coloring $\phi$ such that $\lambda_T(u,\phi)=\lambda_T(v,\phi)= 0$.
	\end{proof}

	\begin{lemma}
		\label{lem-uv}
		Let $L$ be a list assignment of $T$ with $L(u)=\{\alpha\}$, $L(v)=\{\beta\}$ and $|L(w)|\geq 3$ for	$w \in V(T)\setminus \{u,v\}$. Then $T$ has a $1$-defective $L$-coloring $\phi$ such that $\lambda_T(u,\phi) = 0$, and a $1$-defective $L$-coloring $\phi$ such that $\lambda_T(v,\phi) = 0$.
	\end{lemma}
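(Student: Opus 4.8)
The plan is to use Corollary~\ref{lem-notbad1} to dispose of almost every list assignment and then settle the single remaining configuration directly. By Corollary~\ref{lem-notbad1}, $T$ already has a $1$-defective $L$-coloring with $\lambda_T(u,\cdot)=\lambda_T(v,\cdot)=0$ unless
\[
\alpha\neq\beta \qquad\text{and}\qquad L(x)\cap L(y)\cap L(z)=\{\alpha,\beta\},
\]
so it suffices to treat this case; note that then $\alpha,\beta\in L(z)$. Moreover it is enough to produce a $1$-defective $L$-coloring $\phi$ with $\lambda_T(u,\phi)=0$, since the map $\sigma$ of $V(T)$ fixing $x,y,z$ and exchanging $u\leftrightarrow v$, $u_1\leftrightarrow v_1$, $u_2\leftrightarrow v_2$ is an automorphism of $T$ (checked from Fig.~\ref{fig-T}), so the $u$-statement applied to the list assignment $L\circ\sigma$ yields the desired coloring with $\lambda_T(v,\phi)=0$.

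To build $\phi$, set $\phi(u)=\alpha$, $\phi(v)=\beta$, and $\phi(z)=\beta$. The structural point is that $T-\{u,v,z\}$ is the disjoint union of the two triangles $x u_1 v_1$ and $y u_2 v_2$. On $x u_1 v_1$ choose $\phi(x)\in L(x)\setminus\{\alpha,\beta\}$, $\phi(u_1)\in L(u_1)\setminus\{\alpha,\beta\}$ and $\phi(v_1)\in L(v_1)\setminus\{\beta\}$ so that this triangle is not monochromatic; this is possible because the three available sets have sizes at least $1,1,2$ respectively, so even when the (possibly forced) choices at $x$ and $u_1$ coincide there is still a free color at $v_1$ to break the monochromatic triangle. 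Color $y u_2 v_2$ the same way, deleting $\alpha,\beta$ from $L(y),L(u_2)$ and $\beta$ from $L(v_2)$.

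Finally, verify $\phi$ works. No neighbour of $u$ (these are $x,u_1,u_2,y,z$) receives $\alpha$, so $\lambda_T(u,\phi)=0$. Among the neighbours of $v$, and among the neighbours of $z$, only $z$ and only $v$ respectively receive $\beta$, so $v$ and $z$ have defect $1$. Each of $x,u_1,v_1,y,u_2,v_2$ has color different from all of $\phi(u),\phi(v),\phi(z)$, so its defect comes only from its own triangle, which is non-monochromatic and thus contributes defect at most $1$. I do not expect a genuine obstacle here; the only care needed is in bookkeeping which of $\alpha,\beta$ to delete from which list, together with the elementary fact that a $K_3$ one of whose lists has size at least $2$ always admits a non-monochromatic list-coloring (three equal singleton lists being the sole exception).
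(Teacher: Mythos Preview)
Your proof is correct and takes a genuinely different route from the paper's. The paper, in the residual case $\alpha\neq\beta$ with $\{\alpha,\beta\}\subseteq L(x)\cap L(y)\cap L(z)$, colors $z$ with the third color in $L(z)\setminus\{\alpha,\beta\}$, then colors $x,u_1,v_1,u_2,v_2$ greedily and leaves $y$ to the end with a small case split (if $\phi(u_2)=\phi(v_2)$ one is forced to set $\phi(y)\in\{\alpha,\beta\}$, which is where exactly one of $u,v$ may pick up defect~$1$). You instead set $\phi(z)=\beta$, deliberately spending the single allowed conflict on the edge $vz$; this frees up the structure so that $T-\{u,v,z\}$ is just two disjoint triangles, each of which you color non-monochromatically from reduced lists. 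Combined with the automorphism $u\leftrightarrow v$, $u_i\leftrightarrow v_i$, this yields both conclusions without any case analysis on $y$. Your argument is a bit more conceptual and symmetric; the paper's is more hands-on but avoids invoking the automorphism.

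One wording fix: the sentence ``each of $x,u_1,v_1,y,u_2,v_2$ has color different from all of $\phi(u),\phi(v),\phi(z)$'' is not literally true, since $\phi(v_1)$ or $\phi(v_2)$ could equal $\alpha=\phi(u)$. What you need (and what your construction actually guarantees) is that each of these six vertices receives a color different from its neighbours in $\{u,v,z\}$; since $v_1,v_2\notin N(u)$ this is fine. With that correction the verification goes through exactly as you wrote.
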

	\begin{proof}
		By Corollary \ref{lem-notbad1}, it suffices to consider the case that $\alpha \neq \beta$ and
		$\{\alpha,\beta\}\subseteq L(x) \cap L(y) \cap L(z)$. Let    $\phi(u)=\alpha, \phi(v)=\beta$, $\phi(x), \phi(z)$ be the unique color in $L(x)-\{\alpha, \beta\}, L(z) - \{\alpha, \beta\}$, respectively. Let $\phi(u_1) \in L(u_1) - \{\alpha, \phi(x)\}, \phi(v_1) \in L(v_1)-\{\beta, \phi(z)\}$,
		$\phi(u_2) \in L(u_2)-\{\alpha, \phi(z)\}, \phi(v_2) \in L(v_2)-\{\beta, \phi(z)\}$.
		If $\phi(u_2) \ne \phi(v_2)$, then let  $\phi(y)\in L(y) - \{\phi(u_2), \phi(v_2)\}$. Otherwise, let
		$\phi(y)$ be any color in $\{\alpha, \beta\}$. It is easy to verify that $\phi$ is a $1$-defective coloring of $T$. In most cases, $\lambda_T(u,\phi)=\lambda_T(v,\phi) = 0$, except that in the last case, if $\phi(y)=\alpha$, then $\lambda_T(v,\phi) = 0$, if $\phi(y)=\beta$, then $\lambda_T(u,\phi)= 0$.
	\end{proof} 
	
	Now we are ready to prove Theorem \ref{main-th1}.
	
	\noindent
	\textbf{Proof of Theorem \ref{main-th1}.}  
	Let $L$ be any $3$-list assignment of $T(k)$, we shall give a $1$-defective $3$-coloring $\phi$ of $T(k)$. Without loss of generality, we assume that
	$|L(w)|=3$ for each vertex $w$.
	If $L(u) \cap L(v) \ne \emptyset$, say $\alpha \in L(u) \cap L(v)$, then let $\phi(u)=\phi(v) = \alpha$. By Corollary \ref{lem-notbad1},
	$\phi$ can be extended to a 1-defective coloring of $T(k)$ so that $\lambda_T(u,\phi)=\lambda_T(v,\phi) = 0$.
	
	Assume $L(u) \cap L(v) = \emptyset$. For each pair of colors $(\alpha, \beta)$ with $\alpha \in L(u), \beta\in L(v)$,
	a copy of $T$ is called \emph{$(\alpha, \beta)$-tight} if $\{\alpha, \beta \} = L(x) \cap L(y) \cap L(z)$ (here $x,y,z$ refer to the corresponding vertices in that copy of $T$). Note that if a copy of $T$ is $(\alpha, \beta)$-tight, then it is not $(\alpha', \beta')$-tight for any $(\alpha', \beta') \ne (\alpha, \beta)$.
	So by pigeonhole principle, there is a pair $(\alpha, \beta)$ with $\alpha \in L(u), \beta\in L(v)$ such that there are at most two copies of $T$, say $T^1$ and $T^2$, that are
	$(\alpha, \beta)$-tight. Let $\phi(u)=\alpha, \phi(v)=\beta$. By Corollary \ref{lem-notbad1}, for each other copy $T'$ of $T$,
	$\phi$ can be extended to a 1-defective $L$-coloring of $T'$ so that $\lambda_{T'}(u,\phi)=\lambda_{T'}(v,\phi) = 0$. By Lemma \ref{lem-uv}, $\phi$ can be extended to a 1-defective $L$-coloring of $T^1$ so that
	$\lambda_{T^1}(u,\phi)= 0$, and $\phi$ can be extended to a 1-defective $L$-coloring of $T^2$ so that
	$\lambda_{T^2}(v,\phi)= 0$. Thus $\phi$ can be extended to a 1-defective $L$-coloring of $T(k)$.  \qed
	
	\section{Proof of Theorem \ref{main-th2}}\label{sec2}
	Let $\ell\geq k \geq 3$ and $d \geq 0$ be any fixed integers.
	In this section we prove Theorem \ref{main-th2} by constructing graphs that are $(k,d,\ell)$-choosable but not $(k,d, \ell+1)$-choosable.

	Lov\'{a}sz \cite{Lovasz1966} proved that every graph $G$ with $\Delta(G)+1 \leq k(d+1)$ is $d$-defective $k$-colorable. This was generalized to list-version by  Hendrey and Wood  (see Corollary 3.2 in \cite{HW2019}).
	\begin{lemma}[\cite{HW2019}]\label{lem-max-degree}
		Every graph $G$ with $\Delta(G)+1 \leq k(d+1)$ is $d$-defective $k$-choosable.
	\end{lemma}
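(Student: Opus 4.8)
The plan is to run the list-coloring version of Lov\'asz's argument, via a minimal-counterexample (potential function) approach. Fix a list assignment $L$ of $G$ with $|L(v)| \ge k$ for every $v \in V(G)$. Since every list is nonempty, $G$ has at least one $L$-coloring, so among all $L$-colorings we may choose one, say $\phi$, that minimizes the number of monochromatic edges (equivalently, minimizes $\sum_{v \in V(G)} \lambda_G(v,\phi)$, which is twice that number). The claim is that this $\phi$ is automatically $d$-defective, and proving this claim proves the lemma.

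So suppose for contradiction that some vertex $v$ has $\lambda_G(v,\phi) \ge d+1$. For each color $c \in L(v)$ let $n_c$ denote the number of neighbors of $v$ that $\phi$ colors $c$. Then $\sum_{c \in L(v)} n_c \le \deg_G(v) \le \Delta(G) \le k(d+1)-1$, while if every color of $L(v)$ occurred at least $d+1$ times among the neighbors of $v$ we would get $\deg_G(v) \ge |L(v)|\cdot (d+1) \ge k(d+1)$, a contradiction. Hence there is a color $c^\ast \in L(v)$ with $n_{c^\ast} \le d$. Recolor $v$ with $c^\ast$ and leave every other vertex unchanged; this is still an $L$-coloring, and the only monochromatic edges whose status changes are the ones incident to $v$, whose number drops from $\lambda_G(v,\phi) \ge d+1$ to $n_{c^\ast} \le d$. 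This is a strict decrease in the number of monochromatic edges, contradicting the minimality of $\phi$. Therefore every vertex $v$ satisfies $\lambda_G(v,\phi) \le d$, i.e., $\phi$ is a $d$-defective $L$-coloring, so $G$ is $d$-defective $k$-choosable.

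I do not anticipate a real obstacle: the only points requiring a little care are the arithmetic in the pigeonhole step (that $\Delta(G)+1 \le k(d+1)$ forces some color of $L(v)$ to appear at most $d$ times among the neighbors of $v$) and the observation that changing the color of a single vertex alters only the monochromatic edges at that vertex, so the potential strictly decreases. Both are routine. One could equivalently phrase the argument as a terminating greedy recoloring procedure, since the number of monochromatic edges is a nonnegative integer that strictly decreases at each step, but the extremal phrasing above is the cleanest.
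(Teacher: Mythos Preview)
Your argument is correct. The pigeonhole step is fine: since $n_{\phi(v)}=\lambda_G(v,\phi)\ge d+1$, the color $c^\ast$ you find with $n_{c^\ast}\le d$ is necessarily different from $\phi(v)$, so the recoloring really is a change and the potential strictly drops.

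Note, however, that the paper does not give its own proof of this lemma at all; it simply quotes the statement from Hendrey and Wood~\cite{HW2019} (Corollary~3.2 there) as a known result. So there is no ``paper's approach'' to compare against. What you have written is precisely the standard list-coloring adaptation of Lov\'asz's 1966 argument, which is exactly the proof one would expect and essentially what underlies the cited corollary.
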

	
	Lemma~\ref{lem-max-degree} implies that every complete graph with $k(d+1)$ vertices is $d$-defective $k$-choosable. In this paper, we need the following slightly stronger statement.
	\begin{lemma}\label{lem-complete-extend}
		Suppose $G$ is a complete graph of order $k(d+1)$, and $v$ is any vertex in $G$. Assume $L$ is a list assignment with $|L(u)| \geq k$ for any $u \in V(G)\setminus \{v\}$ and $|L(v)|=1$. Then $G$ has a $d$-defective $L$-coloring.
	\end{lemma}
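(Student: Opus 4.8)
The plan is to prove this by a direct greedy argument resting on two simple observations: since $G$ is complete, a coloring of $G$ is $d$-defective if and only if every color class has at most $d+1$ vertices; and ordering the vertices so that $v$ comes first prevents the singleton list $L(v)$ from ever being an obstruction. (This is, in effect, the same counting that underlies Lemma~\ref{lem-max-degree}, with $v$ put in front to handle the forced color.)

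Write $n = k(d+1)$ and let $\alpha$ be the unique color of $L(v)$. First I would fix an ordering $v = w_1, w_2, \dots, w_n$ of $V(G)$ with $v$ in front and the remaining vertices in any order. Then color greedily: set $\phi(w_1) = \alpha$, and for $j = 2, \dots, n$ choose $\phi(w_j)$ to be any color $c \in L(w_j)$ used so far at most $d$ times, that is, with $|\{i < j : \phi(w_i) = c\}| \le d$. By construction no color is ever assigned to more than $d+1$ vertices.

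The point to check is that such a color always exists. When $w_j$ is about to be colored, call a color \emph{saturated} if it already appears on exactly $d+1$ of $w_1, \dots, w_{j-1}$. Counting vertex–color incidences among the first $j-1$ vertices, each saturated color contributes exactly $d+1$ to the total $j-1$, so the number $m$ of saturated colors satisfies $m(d+1) \le j-1 \le n-1 = k(d+1)-1 < k(d+1)$, whence $m \le k-1 < k \le |L(w_j)|$. Thus $L(w_j)$ contains an unsaturated color and the greedy step goes through. At the end every color class has size at most $d+1$, so $\lambda_G(w,\phi) \le d$ for all $w$; since also $\phi(v)=\alpha\in L(v)$ and $\phi(w)\in L(w)$ throughout, $\phi$ is the desired $d$-defective $L$-coloring.

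There is essentially no hard step here: the whole content is the estimate $m \le k-1$, which is tight precisely because $|V(G)| = k(d+1)$, applied uniformly to every vertex including the last. The only thing to be slightly careful about is the vertex $v$ — but since it is colored first and contributes just one incidence, it is counted on the same footing as all the others and requires no special treatment.
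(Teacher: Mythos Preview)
Your proof is correct. The paper takes a different route: it uses Lemma~\ref{lem-max-degree} as a black box and splits into cases on how many vertices have the forced color $c$ in their lists. If at least $d$ vertices besides $v$ have $c$ available, the paper colors $v$ together with $d$ such vertices by $c$, deletes $c$ from the remaining lists, and invokes Lemma~\ref{lem-max-degree} on the residual $K_{(k-1)(d+1)}$ with lists of size at least $k-1$; otherwise it colors every vertex whose list contains $c$ with $c$ and applies Lemma~\ref{lem-max-degree} to the rest. Your greedy argument is more direct and self-contained: placing $v$ first absorbs the singleton list at no cost, and the saturation bound $m\le k-1$ then handles every later vertex uniformly, with no case split and no external lemma (indeed, you are essentially re-proving Lemma~\ref{lem-max-degree} inline with one vertex precolored). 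The paper's approach buys modularity---once Lemma~\ref{lem-max-degree} is in hand, this becomes a short corollary---while yours exposes the underlying counting transparently in a single pass.
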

	\begin{proof}
		Assume $L(v)=\{c\}$. Let $V_c=\{u \in V(G): c \in L(u)\}$. If $|V_c| \geq d$, then we choose arbitrary $d$-subset $D \subset V_c$, and color all the vertices in $D$ and $v$ with color $c$, also we delete the color $c$ from $V_c-D$. Note that $G-v-D$ is a complete graph of order $(k-1)(d+1)$, and each vertex has at least $(k-1)$ colors available. By Lemma \ref{lem-max-degree}, we can color $G-v-D$ without using $c$ and with defect $d$ by colors from the lists. So assume that $|V_c| \leq d-1$. Then we color all the vertices in $V_c$   with color $c$. Note that $G'=G-V_c$ still satisfies that $\Delta(G')+ 1 \leq k(d+1)$ and each vertex has $k$ colors available. Then we can extend the coloring to whole $G$ by Lemma \ref{lem-max-degree}.
	\end{proof}

	The following lemma follows from Lemma 3.3 in \cite{HW2019}.
	
	\begin{lemma}[\cite{HW2019}] \label{lem-extend}
		Let $L$ be a $k$-list assignment of a graph $G$. Let $A,B$ be a partition of $V(G)$, where $G[A]$ is $d$-defective  $L$-colorable. If  for every vertex $v \in B$,
		$$(d+1)|N_G(v) \cap A| +\deg_{B}(v)+1 \leq (d+1)k,$$
		then $G$ is $d$-defective $L$-colorable.
	\end{lemma}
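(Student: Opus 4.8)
The strategy is to fold the colours already used on $A$ into the lists of the vertices of $B$, and then to solve a pure defective-list-colouring problem on $G[B]$. Fix a $d$-defective $L$-coloring $\phi_A$ of $G[A]$, which exists by hypothesis. For each $v\in B$ set $k_v:=k-|N_G(v)\cap A|$ and define the truncated list
$$L'(v):=L(v)\setminus\{\phi_A(u):u\in N_G(v)\cap A\},$$
so that $|L'(v)|\ge k_v$. From the hypothesis $(d+1)|N_G(v)\cap A|+\deg_B(v)+1\le(d+1)k$ we get $(d+1)|N_G(v)\cap A|\le(d+1)k-1$, hence $k_v\ge1$, and after rearranging,
$$\deg_B(v)+1\le(d+1)\bigl(k-|N_G(v)\cap A|\bigr)=(d+1)k_v\le(d+1)|L'(v)|.$$
The point of the truncation is the following: if $\phi_B$ is any $L'$-coloring of $G[B]$ and $\phi:=\phi_A\cup\phi_B$, then $\phi_B(v)\notin\{\phi_A(u):u\in N_G(v)\cap A\}$ for every $v\in B$, so no edge of $G$ joining $A$ to $B$ is monochromatic under $\phi$. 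Consequently the defect of a vertex of $A$ under $\phi$ equals its defect under $\phi_A$ in $G[A]$ (at most $d$), and the defect of a vertex of $B$ under $\phi$ equals its defect under $\phi_B$ in $G[B]$. Thus it suffices to produce a $d$-defective $L'$-coloring $\phi_B$ of $G[B]$, for which the degree bound just displayed (with $\deg_{G[B]}(v)=\deg_B(v)$) is available.

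For that I would prove the following per-vertex form of Lemma~\ref{lem-max-degree} (cf.\ Lemma~3.3 of \cite{HW2019}): \emph{if $H$ is a graph and $\Lambda$ a list assignment with $\deg_H(v)+1\le(d+1)|\Lambda(v)|$ for every $v\in V(H)$, then $H$ has a $d$-defective $\Lambda$-coloring.} Since $H$ is finite there is at least one $\Lambda$-coloring and only finitely many of them; choose one, $\psi$, with the fewest monochromatic edges. If some vertex $v$ had more than $d$ neighbours of its own colour $c$, then some $c'\in\Lambda(v)$ would be the colour of at most $d$ neighbours of $v$ — for otherwise every colour of $\Lambda(v)$ would occur on at least $d+1$ neighbours of $v$, forcing $\deg_H(v)\ge(d+1)|\Lambda(v)|$, contrary to the hypothesis on $H$. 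Recolouring $v$ from $c$ to such a colour $c'$ destroys at least $d+1$ monochromatic edges and creates at most $d$, contradicting the minimality of $\psi$; hence $\psi$ is $d$-defective. Applying this with $H=G[B]$ and $\Lambda=L'$ yields the desired $\phi_B$ and completes the proof.

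I do not expect a genuine obstacle here: the per-vertex lemma is just Lov\'asz's local-switching argument (already used for Lemma~\ref{lem-max-degree}) together with the single colour-deletion step above. The only point needing a little care is checking that the list truncation prevents defect from leaking across the $A$--$B$ cut in \emph{both} directions; this is exactly why $L'(v)$ must delete the $\phi_A$-colour of \emph{every} $A$-neighbour of $v$ (rather than trying to let $v$ reuse the colour of up to $d$ of its $A$-neighbours, which would create bookkeeping problems at those $A$-neighbours), and the hypothesis is calibrated to be precisely strong enough to allow this.
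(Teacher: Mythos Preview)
Your proof is correct and follows the same route as the paper, which simply records that the lemma ``follows from Lemma 3.3 in \cite{HW2019}'' without giving details. You have made that deduction explicit: truncate the lists on $B$ by the $\phi_A$-colours of the $A$-neighbours, check that the hypothesis becomes exactly the per-vertex bound $\deg_{G[B]}(v)+1\le(d+1)|L'(v)|$, and then prove (and apply) the per-vertex defective-choosability lemma---which is precisely Lemma~3.3 of \cite{HW2019}---via the Lov\'asz switching argument.
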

	
	Given a graph $G$ and an non-negative integer $d$, we denote by $G  \ast d$  the graph obtained from the disjoint union of $G$ and $|V(G)|$   copies of the complete graph $K_{d}$, denoted as $\{B_v: v \in V(G)\}$,  by identifying  $v$ with one vertex of $B_v$.
	Fig. \ref{fig-susp} shows the graph $C_9 \ast 4$.
	
	\begin{figure}[H]
		\centering
		\begin{tikzpicture}[>=latex,	
			roundnode/.style={circle, draw=black,fill= magenta, minimum size=1.2mm, inner sep=0pt},
			bluenode/.style={circle, draw=black,fill= green, minimum size=1.2mm, inner sep=0pt}]
			\foreach \t in {1,2,...,9}{
				\node[roundnode]  (A\t) at (40*\t-40:1.2) {};}
			
			\foreach \t in {1,2,...,9}{
				\node[bluenode]  (B\t) at (40*\t-49:1.46) {};}
			
			\foreach \t in {1,2,...,9}{
				\node[bluenode]  (C\t) at (40*\t-31:1.46) {};}
			
			\foreach \t in {1,2,...,9}{
				\node[bluenode]  (D\t) at (40*\t-40:1.7) {};}

			\foreach \t in {1,2,...,9}{
				\draw[fill=cyan!30] (A\t)--(B\t)--(C\t)--(D\t)--(A\t);
				\draw (A\t)--(C\t);
				\draw (B\t)--(D\t);}
			
			\draw (A1)--(A2)--(A3)--(A4)--(A5)--(A6)--(A7)--(A8)--(A9)--(A1);
			
			\foreach \t in {1,2,...,9}{
				\node at (40*\t-40:0.9) {$v_{\t}$};}
		\end{tikzpicture}
		\caption{$C_9 \ast 4$}
		\label{fig-susp}
	\end{figure}
	
	\begin{lemma}\label{lem-product-no}
		$H=C_{2k+1} \ast (2d+2)$ is not $d$-defective $2$-colorable.
	\end{lemma}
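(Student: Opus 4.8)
The plan is to argue by contradiction. Suppose $\phi$ is a $d$-defective $2$-coloring of $H=C_{2k+1}\ast(2d+2)$ using colors in $\{1,2\}$; I will show that $\phi$ restricted to the cycle $C_{2k+1}$ must be a proper $2$-coloring, which is impossible since $C_{2k+1}$ is an odd cycle.

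First I would examine the behaviour of $\phi$ on a single attached clique $B_{v_i}\cong K_{2d+2}$. Let $a$ and $b$ denote the numbers of vertices of $B_{v_i}$ colored $1$ and $2$ respectively, so $a+b=2d+2$. A vertex of $B_{v_i}$ with color $1$ has exactly $a-1$ neighbours of color $1$ inside $B_{v_i}$, so $d$-defectiveness forces $a-1\le d$; symmetrically $b-1\le d$. Combined with $a+b=2d+2$ this pins down $a=b=d+1$, i.e. each colour class of $B_{v_i}$ has size exactly $d+1$.

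The key consequence is that $v_i$ itself, lying in $B_{v_i}$, already attains defect $d$ using only edges inside $B_{v_i}$: if $\phi(v_i)=1$ then $v_i$ has $a-1=d$ same-coloured neighbours in $B_{v_i}$ (and symmetrically if $\phi(v_i)=2$). Hence $\lambda_H(v_i,\phi)\ge d$ from $B_{v_i}$ alone, so $v_i$ can have no further same-coloured neighbour in $H$; in particular its two cycle-neighbours $v_{i-1}$ and $v_{i+1}$ (indices mod $2k+1$) satisfy $\phi(v_{i-1})\ne\phi(v_i)$ and $\phi(v_{i+1})\ne\phi(v_i)$.

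Since this holds for every $i$, the restriction of $\phi$ to $C_{2k+1}$ is a proper $2$-colouring of an odd cycle, a contradiction. I do not expect a genuine obstacle here; the only point requiring care is the exact pigeonhole count forcing $a=b=d+1$, which is precisely what makes even a single same-coloured cycle-edge at $v_i$ impossible.
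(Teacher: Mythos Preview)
Your proof is correct and follows essentially the same approach as the paper's own argument: both observe that in each attached clique $K_{2d+2}$ the two colour classes must have size exactly $d+1$, so each cycle vertex $v_i$ already has defect $d$ inside its clique and hence cannot share a colour with either cycle neighbour, forcing a proper $2$-colouring of the odd cycle. You simply spell out the counting step ($a-1\le d$, $b-1\le d$, $a+b=2d+2$) that the paper states without justification.
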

	\begin{proof}
		Let $v_1,v_2,\ldots,v_{2k+1}$ be the vertices of $C_{2k+1}$ in order, and $B_1,B_2,\ldots, B_{2k+1}$ be the corresponding vertex-set of the $2k+1$ copies of $K_{2d+2}$. Assume there is a $d$-defective $2$-coloring $\phi$ of $H$. Since $B_i$ is a clique of order $2(d+1)$,  each of the two colors is used $d+1$ times by $\phi$ in $B_i$. Therefore, $\phi(v_i) \neq \phi(v_j)$ whenever $v_iv_j\in E(C_{2k+1})$. This is  a contradiction, as $C_{2k+1}$ is not $2$-colorable.
	\end{proof}
	
	\begin{lemma}\label{lem-product-yes}
		Let $H=C_{2k+1} \ast (2d+2)$, with   $v_1,v_2,\ldots,v_{2k+1}$ be the   vertices of $C_{2k+1}$ in this cyclic order. Let $B_1,B_2,\ldots, B_{2k+1}$ be the corresponding vertex-set of the $2k+1$ copies of $K_{2d+2}$. Assume  $L$ is a list assignment of $H$ with $|L(u)| \geq 2$ for $u \in \bigcup^{2k}_{i=1}B_i$ and $|L(u)| \geq 3$ for $u \in B_{2k+1}$. Then for any color $c$,   $H$ has a  $d$-defective $L$-coloring $\phi$ such  that at most one vertex in $B_{2k+1}$ is colored with $c$ in $\phi$.
	\end{lemma}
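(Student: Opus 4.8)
The plan is to build the coloring in three stages: first color the odd cycle $C_{2k+1}$ properly (using the one surplus color available at $v_{2k+1}$ to beat the usual parity obstruction of odd cycles), then extend to the cliques $B_1,\dots,B_{2k}$, and finally extend to $B_{2k+1}$ in a way that also controls the color $c$. The mechanism throughout is that each $B_i$ is a complete graph of order $2d+2=2(d+1)$, so Lemma~\ref{lem-complete-extend} (with the parameter ``$k$'' of that lemma taken to be $2$) lets us extend any precoloring of the single vertex $v_i$ to a $d$-defective $L$-coloring of all of $B_i$, provided every other vertex of $B_i$ has a list of size at least $2$. First I would color the cycle greedily: take $\phi(v_1)\in L(v_1)$ arbitrarily; for $i=2,\dots,2k$ take $\phi(v_i)\in L(v_i)\setminus\{\phi(v_{i-1})\}$, possible since $|L(v_i)|\ge 2$; and take $\phi(v_{2k+1})\in L(v_{2k+1})\setminus\{\phi(v_{2k}),\phi(v_1)\}$, possible because $v_{2k+1}\in B_{2k+1}$ so $|L(v_{2k+1})|\ge 3$. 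Now every one of the $2k+1$ edges of $C_{2k+1}$ is bichromatic, so no $v_i$ has a neighbor of its own color among $\{v_{i-1},v_{i+1}\}$.

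Next, for each $i\in\{1,\dots,2k\}$ I would apply Lemma~\ref{lem-complete-extend} to $B_i$ with the list of $v_i$ replaced by $\{\phi(v_i)\}$ and all other lists left unchanged (they have size $\ge 2$). This produces a $d$-defective $L$-coloring of $B_i$ agreeing with $\phi$ at $v_i$. Since the two cycle edges incident to $v_i$ are bichromatic, the number of neighbors of $v_i$ in $H$ colored $\phi(v_i)$ equals the number inside $B_i$, which is at most $d$; and every other vertex of $B_i$ lies in no other block of $H$, so its defect is at most $d$ as well.

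Finally, for $B_{2k+1}$ I would first delete the color $c$ from the list of every vertex of $B_{2k+1}$ other than $v_{2k+1}$; since those lists have size $\ge 3$, the truncated lists still have size $\ge 2$. Then I would apply Lemma~\ref{lem-complete-extend} to $B_{2k+1}$ with the list of $v_{2k+1}$ replaced by $\{\phi(v_{2k+1})\}$ and the other (truncated) lists in place. The resulting $d$-defective $L$-coloring of $B_{2k+1}$ uses $c$ on no vertex except possibly $v_{2k+1}$, hence on at most one vertex of $B_{2k+1}$; and the defect of $v_{2k+1}$ is within budget for exactly the same reason as in the previous step, because its two cycle edges are bichromatic. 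Assembling the three stages gives the required $d$-defective $L$-coloring $\phi$ of $H$.

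I do not expect a real obstacle once Lemma~\ref{lem-complete-extend} is available. The only genuine idea is to observe that the single surplus color on $B_{2k+1}$ does double duty — it lets us properly $3$-color the odd cycle at the vertex $v_{2k+1}$, and it leaves just enough slack to forbid $c$ on the remaining $2d+1$ vertices of $B_{2k+1}$ — after which everything reduces to bookkeeping about list sizes and to the harmless observation that bichromatic cycle edges do not consume any of the defect budget of the $v_i$'s. The one point to be careful about is applying Lemma~\ref{lem-complete-extend} with the correct value of its parameter, namely $2(d+1)=|B_i|$, and checking that the truncated lists in the last stage are still large enough.
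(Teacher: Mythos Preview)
Your proposal is correct and follows essentially the same approach as the paper: properly color the odd cycle using the extra color at $v_{2k+1}$, then extend to each $B_i$ via Lemma~\ref{lem-complete-extend} (with its parameter equal to $2$), deleting $c$ from the non-$v_{2k+1}$ lists of $B_{2k+1}$ beforehand. Your write-up is in fact more explicit than the paper's, which compresses all of this into three sentences.
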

	\begin{proof}
		As  $|L(v_{2k+1})| \geq 3$, there is a proper $L$-coloring $\phi$ of $C_{2k+1}$.
		We then extend $\phi$ from each $v_i$ to $B_i$ by Lemma~\ref{lem-complete-extend} so that   color $c$ is not used on other vertices of $B_{2k+1}$. This is possible since each vertex in $B_{2k+1}$ still has at least two colors available.
	\end{proof}
	%

	\begin{lemma}\label{lem-Htdk}
		Assume $k \ge 3$, $t \ge 2$, $d \ge 0$ and $\ell=k-2+t$. There exists a  graph $H(t,d,k)=(V,E)$ with a precolored independent set $T=\{u_1,u_2,\ldots,u_t\}$ for which   the following hold:
		\begin{itemize}  
			\item  Assume the precoloring $\phi$ of $T$ uses   $t$ distinct colors {in $[\ell+1]$}. Then   there is a $k$-list assignment $L$ of $H(t,d,k)$ with 
			$ L(v) \subseteq [\ell+1]$ for each vertex $v$ such that $\phi$ cannot be extended to  a $d$-defective coloring $\psi$ of $H(t,d,k)$ with $\lambda_{H(t,d,k)}(u_i,\psi)=0$ for each $u_i \in T$. On the other hand, if $d \ge 1$, then
			for any $k$-list assignment $L$ of $H(t,d,k)-T$,   $\phi$ can be extended to a $d$-defective $L$-coloring $\psi$ of $H(t,d,k)$ such that $\lambda_{H(t,d,k)}(u_i,\psi)=0$ for $i=1,2,\ldots, t-1$ and $\lambda_{H(t,d,k)}(u_t,\psi) \leq 1$. 
			\item  Assume the  precoloring $\phi$ of $T$ uses with at most $t-1$ colors. Then  for any $k$-list assignment $L$ of $H(t,d,k)-T$,   $\phi$   can be extended to a $d$-defective $L$-coloring $\psi$ of   $H(t,d,k)$ such that $\lambda_{H(t,d,k)}(u_i,\psi)=0$ for each $u_i \in T$.
		\end{itemize}
	\end{lemma}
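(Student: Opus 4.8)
The plan is to construct $H(t,d,k)$ explicitly, using the Kr\'{a}l' and Sgall gadget as the prototype in the case $d=0$ and then ``suspending'' the essential vertices with cliques $K_{2d+2}$ so that $d$-defective colourings of the suspended pieces mimic $2$-colourings of odd cycles, via Lemmas~\ref{lem-product-no} and~\ref{lem-product-yes}. Concretely, $H(t,d,k)$ will consist of the precoloured independent set $T=\{u_1,\dots,u_t\}$, a ``propagation layer'' whose vertices are joined to prescribed subsets of $T$, and one mounted copy of an odd-cycle suspension $C_{2k+1}\ast(2d+2)$ sitting on the propagation layer, arranged so that its distinguished block $B_{2k+1}$ meets $N(u_t)$ in a single vertex. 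Since $\ell+1=k+t-1$, a $t$-distinct precolouring of $T$ consumes $t$ colours and leaves exactly $k-1$ free colours; the designed list assignment $L$ is built from the $t$ potential colours of $T$ together with two fixed free colours $\gamma_1,\gamma_2$, so that every suspension vertex is joined to exactly the right number (about $k-2$) of vertices that a $t$-distinct $\phi$ drives to pairwise distinct colours, and hence ends up with available list inside $\{\gamma_1,\gamma_2\}$.

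For the first bullet, given a $t$-distinct $\phi$ and $L$ as above, a direct check shows that under $\phi$ every vertex of the mounted $C_{2k+1}\ast(2d+2)$ has available list contained in $\{\gamma_1,\gamma_2\}$, with equality on the cycle; any $d$-defective extension $\psi$ with $\lambda_{H(t,d,k)}(u_i,\psi)=0$ for all $i$ would then restrict to a $d$-defective colouring of $C_{2k+1}\ast(2d+2)$ using only $\gamma_1,\gamma_2$, contradicting Lemma~\ref{lem-product-no}. For the ``on the other hand'' clause with $d\ge 1$: keep the $t$-distinct $\phi$ but take an arbitrary $k$-list assignment $L$ on $H(t,d,k)-T$. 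Since each suspension vertex meets only about $k-2$ of the precoloured (or propagation-forced) vertices, after $\phi$ the cycle vertices still have $\ge 2$ available colours and the vertices of $B_{2k+1}$ still have $\ge 3$; Lemma~\ref{lem-product-yes} applied with $c=\phi(u_t)$ then produces a $d$-defective colouring of the suspension in which at most one vertex of $B_{2k+1}$ receives $\phi(u_t)$, and I would arrange this to be the vertex of $B_{2k+1}\cap N(u_t)$, so that only $u_t$ acquires a defect, equal to $1$. Filling in the propagation layer (whose vertices keep large enough lists after $\phi$, since each meets only a bounded part of $T$) via Lemma~\ref{lem-complete-extend} and Lemma~\ref{lem-extend} completes an extension $\psi$ with $\lambda_{H(t,d,k)}(u_i,\psi)=0$ for $i<t$ and $\lambda_{H(t,d,k)}(u_t,\psi)\le 1$.

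For the second bullet, if $\phi$ uses at most $t-1$ colours then $\phi(u_a)=\phi(u_b)$ for some $a\ne b$; the propagation vertex joined to both $u_a$ and $u_b$ then loses only one colour under $\phi$, so --- whatever the $k$-list assignment on $H(t,d,k)-T$ --- the suspension vertex it feeds retains an extra available colour. This extra colour is exactly the slack needed to run Lemma~\ref{lem-product-yes} (and then Lemma~\ref{lem-complete-extend} and Lemma~\ref{lem-extend} on the remainder), keeping $T$ properly coloured throughout, giving the desired $\psi$ with $\lambda_{H(t,d,k)}(u_i,\psi)=0$ for every $i$.

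The main obstacle is calibrating the propagation layer together with $L$ so that ``the colours of $T$ are pairwise distinct'' is precisely the threshold at which the mounted suspension collapses onto the single pair $\{\gamma_1,\gamma_2\}$: it must collapse fully for a $t$-distinct $\phi$, yet any one coincidence must restore, uniformly over all $k$-list assignments of $H(t,d,k)-T$, enough room for Lemma~\ref{lem-product-yes}. Making the bookkeeping work for every $t\ge 2$ and $k\ge 3$ --- in particular for $k=3$, where a single propagation vertex or vertex of $T$ can remove only one colour at a time, so the ``funnelling'' onto $\{\gamma_1,\gamma_2\}$ has to be distributed across several vertices --- and routing the unique admissible defect onto $u_t$ in the $d\ge1$ clause, is the delicate part; once the gadget is set up, everything else is an application of the lemmas already proved.
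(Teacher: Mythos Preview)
Your plan is essentially the paper's approach for the range $t\ge k$: mount an odd-cycle suspension $C_s\ast(2d+2)$ and link each block $B_a$ (directly for $k>3$, or through a $K_{d+1}$ propagation piece for $k=3$) to a $(k-2)$-subset of $T$, so that a $t$-distinct precolouring forces every suspension vertex down to the two free colours and Lemma~\ref{lem-product-no} applies, while a coincidence $\phi(u_a)=\phi(u_b)$ frees an extra colour in the block whose subset contains $\{a,b\}$, enabling Lemma~\ref{lem-product-yes}. Two points need fixing.

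First, the cycle length cannot be $2k+1$: you need one block for every $(k-2)$-subset of $T$ (every pair, when $k=3$), so $s\in\{\binom{t}{k-2},\binom{t}{k-2}+1\}$ chosen odd. This is what guarantees, in the second bullet, that \emph{some} block's subset contains both indices of a coincident pair, and in the $d\ge1$ clause, that some block's subset contains $t$. (If you meant $2k+1$ only as the generic notation borrowed from Lemmas~\ref{lem-product-no}--\ref{lem-product-yes}, say so and specify $s$.) Also, in the $d\ge1$ clause it is not that $B_s$ meets $N(u_t)$ in a single vertex; rather every vertex of $B_s$ is adjacent to $u_t$ (directly or via propagation), and Lemma~\ref{lem-product-yes} with $c=\phi(u_t)$ ensures at most one of them receives $\phi(u_t)$.

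Second --- and this is a genuine gap --- your plan does not cover $2\le t\le k-1$. There the $(k-2)$-subset mechanism breaks down (for $t<k-2$ there are no such subsets at all), and no amount of propagation can collapse a $k$-list to two colours by adjacency to fewer than $k-2$ vertices whose colours are forced. The paper handles this range by an entirely different and much simpler gadget: $H(t,d,k)$ is just the join of $K_{(d+1)(k-t)+1}$ with $T$. A $t$-distinct $\phi$ then leaves only $k-t$ colours on the clique, forcing a colour class of size at least $d+2$; at most $t-1$ colours leave $k-t+1$, and Lemma~\ref{lem-max-degree} finishes. No suspension is used or needed.
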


	\begin{proof}
		We consider three cases according to the values of $\ell$ and $k$.
		
		\medskip
		\noindent
		\textbf{Case 1.}   $t \leq k-1$ (hence $3 \leq k \leq  \ell \leq 2k-3$).
		
		Let  $H(t,d,k)$ be the join of a complete graph $K_{(d+1)(k-t)+1}$ and an independent set $T = \{u_1,u_2,\ldots, u_t\}$. Assume $\phi$ is a precoloring of $T$.
		
		Assume $\phi$ uses $t$ distinct colors  {in $[\ell+1]$}.  Let  $S$ be $k$ colors in $[\ell+1]$ that contains $\phi(T)$, and let 
		$L(v)=S$ for all $v \in V(H(t,d,k)-T$.
		
		We then show that $\phi$ cannot be extended to a $d$-defective coloring $\psi$ of $H(t,d,k)$ with $\lambda_{H(t,d,k)}(u_i,\psi)=0$. 
		Assume to the contrary that $\phi$ is extended to a $d$-defective coloring $\psi$ of $H(t,d,k)$ with $\lambda_{H(t,d,k)}(u_i,\psi)=0$. Then vertices in the complete graph $K_{(d+1)(k-t)+1}$ are   colored by $k-t$ colors. So one color class contains at least $d+2$ vertices, which is a contradiction. On the other hand, if $d \ge 1$, then for any  
		$k$-list assignment $L$ of  $H(t,d,k)-T$, let $w$ be an arbitrary vertex in $V(H(t,d,k)-T$, 
		$L'(v)=L(v) - \phi(T)$ for $v\in V(H(t,d,k)-T - \{w\}$, and $L'(w) = L(w) - \phi(T-\{u_t\})$. We have $|L'(v)| \geq k-t$ for any $v \in V(K_{(d+1)(k-t)+1})-\{w\}$, and $|L'(w)| \geq k-t+1$. By Lemma \ref{lem-max-degree}, $K_{(d+1)(k-t)+1}$ has a $d$-defective $L'$-coloring, which is an extension of    $\phi$ to a  $d$-defective $L$-coloring of $H(t,d,k)$ with $\lambda_{H(t,d,k)}(u_i,\psi)=0$ for each $u_i \in T-\{u_t\}$, and $\lambda_{H(t,d,k)}(u_t,\psi)\le 1$.

		If $\phi$ use at most $t-1$  colors, then for any  
		$k$-list assignment $L$ of  $V(H(t,d,k))-T$, let $L'(v)=L(v) - \phi(T)$ for $v\in V(H(t,d,k))-T$. We have $|L'(v)| \geq k-t+1$ for any $v \in V(K_{(d+1)(k-t)+1})$. By Lemma \ref{lem-max-degree}, $K_{(d+1)(k-t)+1}$ has a $d$-defective $L'$-coloring, which is an extension of    $\phi$ to a  $d$-defective $L$-coloring of $H(t,d,k)$ with $\lambda_{H(t,d,k)}(u_i,\psi)=0$ for each   $u_i \in T$.

		\medskip
		\noindent
		\textbf{Case 2.} $t \geq k = 3$ (hence $\ell \geq 2k-2=4$).
		
		Let $T=\{u_1,u_2,\ldots, u_t\}$ be an independent set.
		Let $s \in\{ {t \choose 2}, {t \choose 2}+1\}$ be an odd integer, and let $\mathcal{X}=\{X_a: 1\le a \le s\}$  be a set disjoint copies of   $K_{d+1}$. Let $\pi: [s] \to \{(i,j): 1 \le i < j \le t\}$ be a surjective map, and for each $a \in [s]$ with $\pi(a)=(i,j)$, connect each vertex of $X_a$ to $u_i$ and $u_j$. 
		Add the graph $H = C_{s} \ast 2(d+1)$, with $\{B_a: 1 \le a \le s\}$   be the corresponding vertex-set of the $s$  copies of $K_{2d+2}$. For each $a \in [s]$, choose one vertex   $x_a \in X_a$  and connect $x_a$ to all the vertices in $B_a$. 
		This completes the construction of   $H(t,d,3)$.  Fig.~\ref{fig-Ht3} is an example with $t=4$ and $d=3$.
		
		\begin{figure}[htbp]
			\centering
			\centering
			\begin{tikzpicture}[>=latex,	
				Rnode/.style={circle, draw=black,fill= magenta, minimum size=1.2mm, inner sep=0pt},
				Ynode/.style={circle, draw=black,fill= yellow, minimum size=1.2mm, inner sep=0pt},
				Gnode/.style={circle, draw=black,fill= green, minimum size=1.2mm, inner sep=0pt},
				Bnode/.style={circle, draw=black,fill= cyan, minimum size=1.5mm, inner sep=0pt}]
				
				\foreach \t in {1,2,...,7}{
					\draw[fill=cyan, opacity=0.3] (1.5*\t-1.5,0) circle[radius =0.5cm];
					\draw[fill=cyan, opacity=0.3] (1.5*\t-1.5,1.5) circle[radius =0.4cm];}
				

				\node [Rnode] (A3) at ($(0,0)+(90:0.4)$){};
				\node [Rnode] (B3) at ($(1.5,0)+(90:0.4)$){};
				\node [Rnode] (C3) at ($(3,0)+(90:0.4)$){};
				\node [Rnode] (D3) at ($(4.5,0)+(90:0.4)$){};
				\node [Rnode] (E3) at ($(6,0)+(90:0.4)$){};
				\node [Rnode] (F3) at ($(7.5,0)+(90:0.4)$){};
				\node [Rnode] (G3) at ($(9,0)+(90:0.4)$){};

				\foreach \t in {1,2,4,5,6,7,8}{
					\node[Ynode]  (A\t) at ($(0,0)+(45*\t-45:0.4)$) {};}
				\foreach \t in {1,2,4,5,6,7,8}{
					\node[Ynode]  (B\t) at ($(1.5,0)+(45*\t-45:0.4)$) {};}	
				\foreach \t in {1,2,4,5,6,7,8}{
					\node[Ynode]  (C\t) at ($(3,0)+(45*\t-45:0.4)$) {};}		
				\foreach \t in {1,2,4,5,6,7,8}{
					\node[Ynode]  (D\t) at ($(4.5,0)+(45*\t-45:0.4)$) {};}
				\foreach \t in {1,2,4,5,6,7,8}{
					\node[Ynode]  (E\t) at ($(6,0)+(45*\t-45:0.4)$) {};}
				\foreach \t in {1,2,4,5,6,7,8}{
					\node[Ynode]  (F\t) at ($(7.5,0)+(45*\t-45:0.4)$) {};}	
				\foreach \t in {1,2,4,5,6,7,8}{
					\node[Ynode]  (G\t) at ($(9,0)+(45*\t-45:0.4)$) {};}		
				
				\foreach \t in {1,2,...,8}{
					\foreach \r in {1,2,...,8}{	
						\draw (A\t)--(A\r);
						\draw (B\t)--(B\r);
						\draw (C\t)--(C\r);
						\draw (D\t)--(D\r);
						\draw (E\t)--(E\r);
						\draw (F\t)--(F\r);
						\draw (G\t)--(G\r);}}	
				
				\node [Gnode] (H4) at ($(0,1.5)+(270:0.3)$){};
				\node [Gnode] (I4) at ($(1.5,1.5)+(270:0.3)$){};
				\node [Gnode] (J4) at ($(3,1.5)+(270:0.3)$){};
				\node [Gnode] (K4) at ($(4.5,1.5)+(270:0.3)$){};
				\node [Gnode] (L4) at ($(6,1.5)+(270:0.3)$){};
				\node [Gnode] (M4) at ($(7.5,1.5)+(270:0.3)$){};
				\node [Gnode] (N4) at ($(9,1.5)+(270:0.3)$){};
				
				\foreach \t in {1,2,3}{
					\node[Ynode]  (H\t) at ($(0,1.5)+(90*\t-90:0.3)$) {};
					\node[Ynode]  (I\t) at ($(1.5,1.5)+(90*\t-90:0.3)$) {};
					\node[Ynode]  (J\t) at ($(3,1.5)+(90*\t-90:0.3)$) {};
					\node[Ynode]  (K\t) at ($(4.5,1.5)+(90*\t-90:0.3)$) {};
					\node[Ynode]  (L\t) at ($(6,1.5)+(90*\t-90:0.3)$) {};
					\node[Ynode]  (M\t) at ($(7.5,1.5)+(90*\t-90:0.3)$) {};
					\node[Ynode]  (N\t) at ($(9,1.5)+(90*\t-90:0.3)$) {};}
				
				\foreach \t in {1,2,3,4}{
					\foreach \r in {1,2,3,4}{
						\draw (H\t)--(H\r);	
						\draw (I\t)--(I\r);	
						\draw (J\t)--(J\r);	
						\draw (K\t)--(K\r);	
						\draw (L\t)--(L\r);
						\draw (M\t)--(M\r);
						\draw (N\t)--(N\r);}}

				\draw [line width =1.2pt] ($(0,0)+(135:0.6)$)--(H4);
				\draw [line width =1.2pt] ($(0,0)+(90:0.55)$)--(H4);
				\draw [line width =1.2pt] ($(0,0)+(45:0.6)$)--(H4);
				
				\draw [line width =1.2pt] ($(1.5,0)+(135:0.6)$)--(I4);
				\draw [line width =1.2pt] ($(1.5,0)+(90:0.55)$)--(I4);
				\draw [line width =1.2pt] ($(1.5,0)+(45:0.6)$)--(I4);
				
				\draw [line width =1.2pt] ($(3,0)+(135:0.6)$)--(J4);
				\draw [line width =1.2pt] ($(3,0)+(90:0.55)$)--(J4);
				\draw [line width =1.2pt] ($(3,0)+(45:0.6)$)--(J4);
				
				\draw [line width =1.2pt] ($(4.5,0)+(135:0.6)$)--(K4);
				\draw [line width =1.2pt] ($(4.5,0)+(90:0.55)$)--(K4);
				\draw [line width =1.2pt] ($(4.5,0)+(45:0.6)$)--(K4);
				
				\draw [line width =1.2pt] ($(6,0)+(135:0.6)$)--(L4);
				\draw [line width =1.2pt] ($(6,0)+(90:0.55)$)--(L4);
				\draw [line width =1.2pt] ($(6,0)+(45:0.6)$)--(L4);
				
				\draw [line width =1.2pt] ($(7.5,0)+(135:0.6)$)--(M4);
				\draw [line width =1.2pt] ($(7.5,0)+(90:0.55)$)--(M4);
				\draw [line width =1.2pt] ($(7.5,0)+(45:0.6)$)--(M4);
				
				\draw [line width =1.2pt] ($(9,0)+(135:0.6)$)--(N4);
				\draw [line width =1.2pt] ($(9,0)+(90:0.55)$)--(N4);
				\draw [line width =1.2pt] ($(9,0)+(45:0.6)$)--(N4);

				\node [Bnode] (T1) at (1.5,4) {};
				\node [Bnode] (T2) at (3.5,4) {};
				\node [Bnode] (T3) at (5.5,4) {};
				\node [Bnode] (T4) at (7.5,4) {};
				
				\draw [magenta,thick] (A3)--(B3)--(C3)--(D3)--(E3)--(F3)--(G3) ..controls (10,0.8) and (-1,0.8).. (A3);

				\draw [line width =1.5pt] ($(0,1.5)+(90:0.43)$)--(T1);
				\draw [line width =1.5pt] ($(0,1.5)+(50:0.43)$)--(T2);
				\draw [line width =1.5pt] ($(1.5,1.5)+(90:0.43)$)--(T1);
				\draw [line width =1.5pt] ($(1.5,1.5)+(50:0.43)$)--(T3);
				\draw [line width =1.5pt] ($(3,1.5)+(95:0.43)$)--(T1);
				\draw [line width =1.5pt] ($(3,1.5)+(75:0.43)$)--(T4);
				\draw [line width =1.5pt] ($(4.5,1.5)+(90:0.43)$)--(T2);
				\draw [line width =1.5pt] ($(4.5,1.5)+(75:0.43)$)--(T3);
				\draw [line width =1.5pt] ($(6,1.5)+(95:0.43)$)--(T2);
				\draw [line width =1.5pt] ($(6,1.5)+(75:0.43)$)--(T4);
				\draw [line width =1.5pt] ($(7.5,1.5)+(100:0.43)$)--(T3);
				\draw [line width =1.5pt] ($(7.5,1.5)+(90:0.43)$)--(T4);
				\draw [line width =1.5pt] ($(9,1.5)+(110:0.43)$)--(T3);
				\draw [line width =1.5pt] ($(9,1.5)+(90:0.43)$)--(T4);
				
				\node at (1.5,4.25) {$u_1$};
				\node at (3.5,4.25) {$u_2$};
				\node at (5.5,4.25) {$u_3$};
				\node at (7.5,4.25) {$u_4$};
				
				\node at (10,0) {$H$};
				
				\foreach \t in {1,2,...,7}{
					\node at (1.5*\t-1.5,-0.8) {$B_{\t}$};}
				\foreach \t in {1,2,...,7}{
					\node at (1.5*\t-1,1) {\small $X_{\t}$};}
			\end{tikzpicture}
			
			\caption{$H(4,3,3)$; The green vertices are $x_i$s and red vertices are $v_i$s in the copy of $C_7\ast 8$. We used the pair $(u_3,u_4)$ twice as $\binom{4}{2}$ is even.}
			\label{fig-Ht3}
		\end{figure}
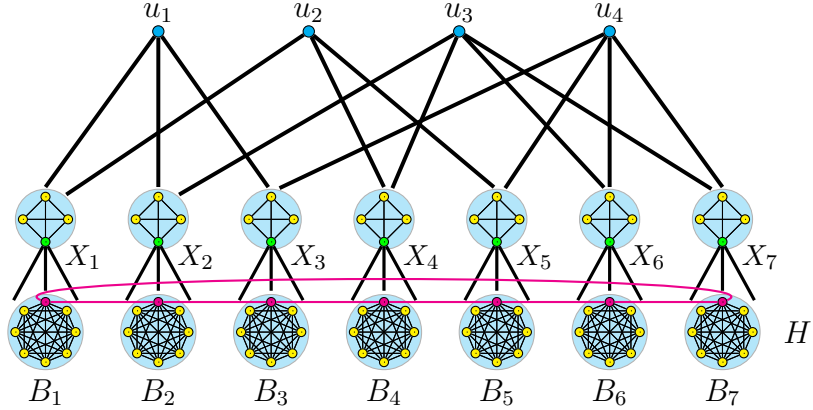
		
		Assume $\phi$ is a precoloring of $T$. 
		
		Assume $\phi$ uses $t$ distinct colors in $[\ell+1]$. 
		Let $L$ be a 3-list assignment of $H(t,d,3)-T$  defined as follows:  
		For $x \in X_a$ with $a \in [s]$ and $\pi(a)=(i,j)$, let $L(x)=\{\phi(u_i), \phi(u_j),\beta\}$
		and   {$L(y)=\{\phi(u_1),\phi(u_2),\beta\}$ for $y \in V(H)$,} where $\beta \in [\ell+1]\setminus \phi(T)$ is an fixed color.  We shall show that $\phi$ cannot be extended to a $d$-defective coloring $\psi$ of $H(t,d,3)$ with $\lambda_{H(t,d,3)}(u_i,\psi)=0$ for $i=1,2,\ldots, t$. Otherwise, assume
		$\phi$ is extended to a $d$-defective coloring $\psi$ of $H(t,d,3)$ with $\lambda_{H(t,d,3)}(u_i,\psi)=0$. Then vertices in $X_a$ are all colored by $\beta$ for each $a \in [s]$, and hence vertices in $H$ are colored with  {$\phi(u_1)$ and $\phi(u_2)$}. This is in contrary to Lemma \ref{lem-product-no}.

		On the other hand, for any $k$-list assignment $L$ of $H(t,d,3)-T$, let 
		$L'$ be the list assignment of $H$ obtained from $L$ by deleting all the colors used in $\phi$ on  $\{x_1,x_2,\ldots,x_{s-1}\}$.  It is clear that
		\[
		|L'(w)| \geq
		\begin{cases} 2, &\text{if $w \in \bigcup^{s-1}_{i=1}B_i$}, \cr
			3, & \text{if $w \in B_s$}.  
		\end{cases}
		\]
		Therefore, by Lemma \ref{lem-product-yes}, we can extend the $\phi$ to a $d$-defective coloring of $H$, such that at most one vertex in $B_s$ uses $\phi(x_s)$, which yields a $d$-defective $L$-coloring as desired.

		Assume  $\phi$ uses at 
		most $t-1$ distinct colors. Then  at least two vertices $u_i,u_j$ in $T$ received the same color.    Without loss of generality, assume $\pi(s)=(i,j)$.  
		Then we extend $\phi$   to $V(H(t,d,3)) \setminus V(H)$ by assigning  $\phi(w) \in L(w) \setminus \phi(N_{H(t,d,3)}(w)) \cap T)$ for each vertex $w \in \bigcup^{s}_{i=1}V(X_i)$
		such that $w$ has defect at most $d$  if $w\ne x_s$ and at most $d-1$ if $w=x_s$, this is possible as each $X_i$ has exactly $d+1$ vertices and $x_s$ has two colors available.

		Finally, we need to extend $\phi$   to $H$. Let $L'$ be the list assignment of $H$ defined as
		\[
		L'(v)= \begin{cases} L(v)- \{\phi(x_i): i=1,2,\ldots, s\}, &\text{if $v \in V(H) \setminus B_s$}, \cr 
			L(v) - \{\phi(x_i): i=1,2, \ldots, s-1\}, &\text{ if $v \in B_s$}.
		\end{cases}
		\]
		
		Then $|L'(u)| \geq 2$ for $u \in V(H)\setminus B_s$, and $|L'(u)| \geq 3$ if $u \in B_s$. Since $d\geq 1$, then by Lemma~\ref{lem-product-yes}, we can extend $\phi$ to a $d$-defective $L'$-coloring of $H$ such that at most one vertex in $B_s$ is colored with $\phi(x_s)$, which yields a desired coloring.

		\medskip
		\noindent
		\textbf{Case 3.} $t \geq k > 3$ (hence $\ell \geq 2k-2 \geq 6$).
		
		Let $T=\{u_1,u_2,\ldots, u_t\}$ be an independent set. 
		Let $s \in \{{t \choose k-2}, {t \choose k-2}+1\}$ be an odd integer. Denote by ${[t] \choose k-2}$ the set of $(k-2)$-subsets of $[t]=\{1,2,\ldots, t\}$. 
		Let  $H$ be a  copy of $C_{s}\ast (2d+2)$, $B_1,B_2,\ldots, B_{s}$ be the corresponding vertex-set of the $s$  copies of $K_{2d+2}$. Let $\pi: [s] \to {[t] \choose k-2}$  be a surjective map.    
		For $a \in [s]$, connect each vertex in $B_a$ to $\{u_i: i \in \pi(a)\}$.  
		This completes the construction of $H(t,d,k)$.

		Suppose $\phi$ is a precoloring of $T$ with $t$ distinct colors in $[\ell+1]$. For each $B_i$, let $T_i \subset T$ be the neighbors of vertices of $B_i$. Let $L$ be a $k$-list assignment for vertices in $H$, such that for any $w \in B_i$, $L(w)=\bigcup_{v\in T_i}\{\phi(v)\}\cup \{\beta_1,\beta_2\}$, where $\beta_1,\beta_2 \in [\ell+1]\setminus \phi(T)$ (note that $\ell+1 - t \ge 2$). If $\phi$ can be extended to a $d$-defective coloring $\psi$ of $H(t,d,k)$ with $\lambda(u_i,\psi)=0$, then vertices in $H$ can be only colored with two colors $\{\beta_1,\beta_2\}$. But this is not possible by Lemma \ref{lem-product-no}.
		
		On the other hand, if $d \geq 1$, for any $k$-list assignment $L$ of $H(t,d,3)-T$, let 
		$L'$ be the list assignment of $H$ obtained from $L$ as follows (without loss of generality, we assume that $t \in \pi(s)$): 
		\[
		L'(w) =
		\begin{cases} L(w)\setminus \phi(T), &\text{if $w \in \bigcup^{s-1}_{i=1}B_i$}, \cr
			L(w)\setminus \phi(T-u_t), & \text{if $w \in B_s$}.  
		\end{cases}
		\]
		It is clear that 
		$|L'(w)| \geq 2$  if $w \in \bigcup^{s-1}_{i=1}B_i$ and  $|L'(w)| \geq 3$ if $w \in B_s$.   
		Therefore, by Lemma \ref{lem-product-yes}, we can extend the $\phi$ to a $d$-defective coloring of $H$, such that at most one vertex in $B_s$ uses $\phi(x_s)$, which yields a $d$-defective $L$-coloring as desired.

		Assume that $\phi$ is a precoloring of $T$ with $t-1$ distinct colors, and $L$ is a $k$-list assignment of $H$. It follows that at least two vertices in $T$ received same color, say $u_1$ and $u_2$. Without loss of generality, assume that $\{1,2\} \subset \pi(s)$. Let $L'$ be the list assignment of $H$ obtained from $L$ by deleting all the colors used in $\phi$ on $T$. It is clear that $|L(w)|\geq 2$ and moreover, $|L(w)| \geq 3$ if $w \in B_s$. So we can extend the precoloring to $H$ by Lemma~\ref{lem-product-yes} as desired.
	\end{proof}
	
	\medskip
	
	Now we are ready to prove Theorem \ref{main-th2}.

	\noindent
	\textbf{Proof of Theorem \ref{main-th2}.} 
	Assume $\ell \geq k \geq 3$ and $d \geq 0$.
	We shall construct a graph which is $(k,d,\ell)$-choosable but not $(k,d,\ell+1)$-choosable.
	
	Let $t=\ell +2-k \geq 2$. Let $G_1=K_{k(d+1)}$, and $z \in V(G_1)$. Take $k$   disjoint copies of $K_{(k-1)(d+1)}$, which are denoted as $C_1,C_2,\ldots C_k$.  Connect all the vertices of $C_1 \cup \ldots \cup C_k$ to the vertex $z$. 
	The resulting graph is $G_2$. 
	
	Let $q= \binom{(k-1)(d+1)}{k-1}$. For $i=1,2,\ldots, k$, let $X_{i,1}, X_{i,2}, \ldots, X_{i,q}$ be the family of all the  $(k-1)$-subsets   of $V(C_i)$.  For $1 \le i \le k, 1 \le j \le q$,   take $t$ copies of disjoint complete graphs $K_{d+1}$, which are denoted by 
	$D_{i,j,1}, D_{i,j,2}, \ldots, D_{i,j,t}$.
	For $s=1,2,\ldots, t$, connect all the   vertices in $D_{i,j,s}$ to all vertices in $X_{i,j}$.   
	The resulting graph is $G_3$. 
	
	For $1 \le i \le k, 1 \le j \le q$,  take a $t$-set $T_{i,j}=\{u_{i,j,1},u_{i,j,2},\ldots,u_{i,j,t}\}$ with $u_{i,j,s} \in V(D_{i,j,s})$, and build a copy $H_{i,j}(t,d,k)$ of $H(t,d,k)$ described in Lemma \ref{lem-Htdk} with $T=T_{i,j}$. This completes the construction of the graph $G$. See Fig.~\ref{fig-main} for illustration.

	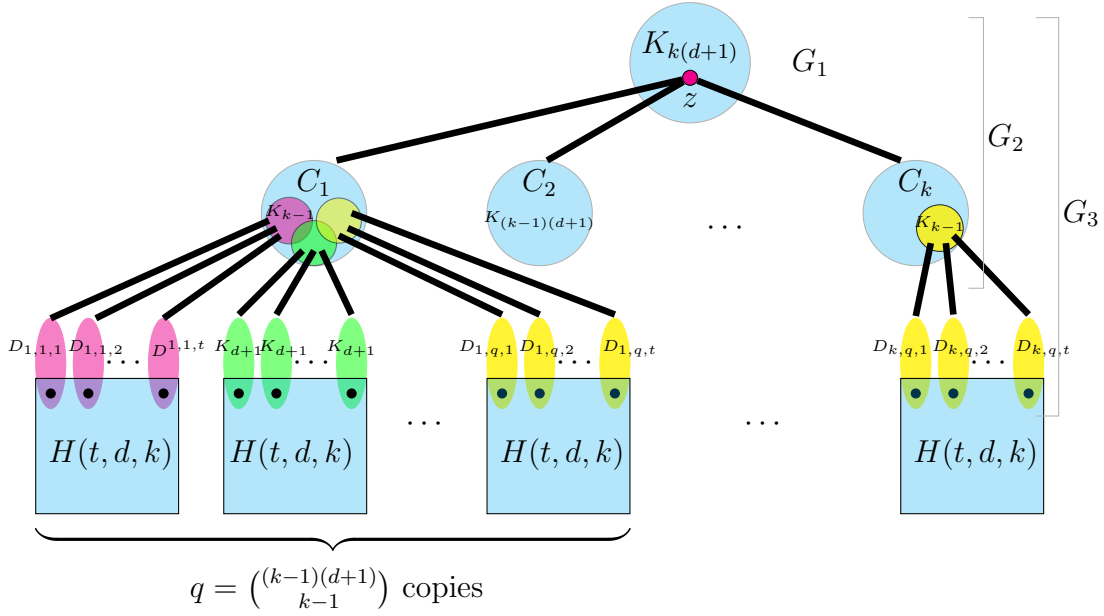
\begin{figure}[h]
		\centering
		\centering
		\begin{tikzpicture}[>=latex,	
			Rnode/.style={circle, draw=black,fill= magenta, minimum size=2mm, inner sep=0pt},
			Wnode/.style={circle, draw=black, fill=black, minimum size=1.2mm, inner sep=0pt}]
			\draw[fill=cyan, opacity=0.3] (5,6) circle[radius =0.8cm];
			\draw[fill=cyan, opacity=0.3] (0,4) circle[radius =0.7cm];
			\draw[fill=cyan, opacity=0.3] (3,4) circle[radius =0.7cm];
			\draw[fill=cyan, opacity=0.3] (8,4) circle[radius =0.7cm];
			
			\node [Rnode] (r) at (5,5.8){};
			\node at (5,5.5) {$z$};
			\node at (5,6.2) {$K_{k(d+1)}$};
			
			\draw[line width = 2.5pt] (r)--(0.3,4.7);
			\draw[line width = 2.5pt] (r)--(3.1,4.7);
			\draw[line width = 2.5pt] (r)--(7.8,4.7);
			
			\draw[fill=magenta,opacity=0.5] (-0.33,3.9) circle[radius =0.31cm];
			\draw[fill=green,opacity=0.5] (0,3.6) circle[radius =0.3cm];
			\draw[fill=yellow,opacity=0.5] (0.33,3.9) circle[radius =0.3cm];
			
			\node at (-0.31,4) {\tiny $K_{k-1}$};
			
			\filldraw[magenta, opacity=0.5] (-3.5,2) ellipse (0.2cm and 0.6cm);
			\filldraw[magenta, opacity=0.5] (-3,2) ellipse (0.2cm and 0.6cm);
			\node at (-2.5,2) {$\cdots$};
			\filldraw[magenta, opacity=0.5] (-2,2) ellipse (0.2cm and 0.6cm);
			
			\node at (-3.7,2.2) {\tiny $D_{1,1,1}$};
			\node at (-2.9,2.2) {\tiny $D_{1,1,2}$};
			\node at (-1.8,2.2) {\tiny $D^{1,1,t}$};

			\filldraw[fill=cyan, fill opacity=0.3] (-3.7,0) rectangle (-1.8,1.8);
			\node[Wnode] at (-3.5,1.6){};
			\node[Wnode] at (-3,1.6){};
			\node[Wnode] at (-2,1.6){};
			\node at (-2.7,0.8){$H(t,d,k)$};
			
			\draw[line width = 2.5pt] (-0.6,3.9)--(-3.5,2.6);
			\draw[line width = 2.5pt] (-0.5,3.8)--(-2.9,2.6);
			\draw[line width = 2.5pt] (-0.45,3.7)--(-2,2.6);
			
			\filldraw[green, opacity=0.5] (-1,2) ellipse (0.2cm and 0.6cm);
			\filldraw[green, opacity=0.5] (-0.5,2) ellipse (0.2cm and 0.6cm);
			\node at (0,2) {$\cdots$};
			\filldraw[green, opacity=0.5] (0.5,2) ellipse (0.2cm and 0.6cm);
			\node at (-1,2.2) {\tiny $K_{d+1}$};
			\node at (-0.4,2.2) {\tiny $K_{d+1}$};
			\node at (0.5,2.2) {\tiny $K_{d+1}$};
			
			\draw[fill=cyan, fill opacity=0.3] (-1.2,0) rectangle (0.7,1.8);
			\node[Wnode] at (-1,1.6){};
			\node[Wnode] at (-0.5,1.6){};
			\node[Wnode] at (0.5,1.6){};
			\node at (-0.3,0.8){$H(t,d,k)$};
			
			\draw[line width = 2.5pt] (-0.15,3.5)--(-1,2.65);
			\draw[line width = 2.5pt] (0,3.5)--(-0.5,2.65);
			\draw[line width = 2.5pt] (0.1,3.5)--(0.5,2.65);
			
			\filldraw[yellow, opacity=0.8] (2.5,2) ellipse (0.2cm and 0.6cm);
			\filldraw[yellow, opacity=0.8] (3,2) ellipse (0.2cm and 0.6cm);
			\node at (3.5,2) {$\cdots$};
			\filldraw[yellow, opacity=0.8] (4,2) ellipse (0.2cm and 0.6cm);
			\node at (2.3,2.2) {\tiny $D_{1,q,1}$};
			\node at (3.1,2.2) {\tiny $D_{1,q,2}$};
			\node at (4.2,2.2) {\tiny $D_{1,q,t}$};
			
			\node[Wnode] at (2.5,1.6){};
			\node[Wnode] at (3,1.6){};
			\node[Wnode] at (4,1.6){};
			\draw[fill=cyan, fill opacity=0.3] (2.3,0) rectangle (4.2,1.8);
			\node at (3.3,0.8){$H(t,d,k)$};
			
			\draw[line width = 2.5pt] (0.33,3.7)--(2.5,2.65);
			\draw[line width = 2.5pt] (0.45,3.8)--(3,2.65);
			\draw[line width = 2.5pt] (0.5,4)--(4,2.65);
			
			\filldraw[yellow, opacity=0.8] (8,2) ellipse (0.2cm and 0.6cm);
			\filldraw[yellow, opacity=0.8] (8.5,2) ellipse (0.2cm and 0.6cm);
			\node at (9,2) {$\cdots$};
			\filldraw[yellow, opacity=0.8] (9.5,2) ellipse (0.2cm and 0.6cm);
			\node at (7.8,2.2) {\tiny $D_{k,q,1}$};
			\node at (8.6,2.2) {\tiny $D_{k,q,2}$};
			\node at (9.7,2.2) {\tiny $D_{k,q,t}$};
			
			\draw[fill=yellow, opacity=0.8] (8.32,3.8) circle[radius =0.31cm];
			\node at (8.32,3.85) {\tiny $K_{k-1}$};
			\node[Wnode] at (8,1.6){};
			\node[Wnode] at (8.5,1.6){};
			\node[Wnode] at (9.5,1.6){};
			\draw[fill=cyan, fill opacity=0.3] (7.8,0) rectangle (9.7,1.8);
			\node at (8.8,0.8){$H(t,d,k)$};
			
			\draw[line width = 2.5pt] (8.2,3.6)--(8,2.63);
			\draw[line width = 2.5pt] (8.4,3.6)--(8.5,2.65);
			\draw[line width = 2.5pt] (8.5,3.7)--(9.5,2.65);
			
			\node at (1.5,1.2) {$\cdots$};
			\node at (5.5,3.8) {$\cdots$};
			\node at (6,1.2) {$\cdots$};
			
			\node at (0,4.4) {$C_1$};
			\node at (3,4.4) {$C_2$};
			\node at (3,3.9) {\tiny $K_{(k-1)(d+1)}$};
			\node at (8,4.4) {$C_k$};
			\draw [decorate, very thick, decoration = {calligraphic brace,mirror,amplitude=8pt}] (-3.7,-0.2)--  (4.2,-0.2);
			\node at (0.3,-1) {$q=\binom{(k-1)(d+1)}{k-1} ~ \text{copies}$};
			
			\node at (6.6,6) {$G_1$};
			\draw[gray!60] (8.7,6.6)--(8.9,6.6)--(8.9,3)--(8.7,3);
			\node at (9.2,5) {$G_2$};
			\draw[gray!60] (9.6,6.6)--(9.9,6.6)--(9.9,1.3)--(9.6,1.3);
			\node at (10.2,4) {$G_3$};
		\end{tikzpicture}
		
		\caption{Illustration of the main construction}
		\label{fig-main}
	\end{figure}

	We first show that $G$ is not $(k,d,\ell+1)$-choosable. We shall construct a $k$-list assignment $L$ of $G$, with $L(v) \subseteq [\ell+1]$ for each vertex $v$,  such that $G$ is not $d$-defective $L$-colorable. 
	
	For each vertex $v$ in the subgraph $G_2$, let $L(v)=\{1,2,\ldots,k\}$. For $i=1,2,\ldots,k$, $1 \leq j \leq q$, $1 \leq s \leq t$, let $\theta_{i,j}: [t] \to \{  i, k+1,k+2,\ldots,\ell+1\}$ be a bijection.  For $v \in D_{i,j,s}$, let $L(v) = ([k] \setminus \{i\}) \cup \{\theta_{i,j}(s)\}$.

	For $1 \le i \le k$ and $1 \le j \le q$, for $v \in V(H_{i,j}(t,d,k))-T_{i,j}$, let $L(v)=L_{i,j}(v)$, where $L_{i,j}$  is a list assignment defined as follows: 
	
	Let $\phi_{i,j}$ be the coloring of $T_{i,j}$ defined as 
	$\phi_{i,j}(u_{i,j,s}) = \theta_{i,j}(s)$.  By Lemma \ref{lem-Htdk},   there is a $k$-list assignment $L_{i,j}$ of  $H_{i,j}(t,d,k)$ with  
	$ L_{i,j}(v) \subseteq [\ell+1]$ such that $\phi_{i,j}$ cannot be extended to  a $d$-defective coloring $\psi_{i,j}$ of $H_{i,j}(t,d,k)$ with $\lambda_{H_{i,j}(t,d,k)}(u_{i,j,s},\psi_{i,j})=0$ for each $u_{i,j,s}\in T_{i,j}$.

	Now we prove that $G$ is not $d$-defective $L$-colorable. Assume to the contrary that $G$ has an $L$-coloring $\psi$ with defect $d$. Assume $\pi(z)=r$. Since $G_1$ is a complete graph with $k(d+1)$ vertices, there must be $d$ neighbors of $z$ in $G_1$   colored with $r$. Then the $(k-1)(d+1)$ vertices of $C_r$  are colored with $\{1,2,\ldots, k\}\setminus \{r\}$. Hence  each of the $k-1$ colors is used $d+1$ times in $\psi$. Assume $X_{r,p}$ is a  $(k-1)$-subset   of $C_r$ that are colored by distinct colors from  $\{1,2,\ldots, k\}\setminus \{r\}$. Note that each vertex $x \in X_{r,p}$ has $d$ neighbors in $C_r$ that are colored the same color as $x$. 
	As  all vertices  in $D_{r,p,1}, D_{r,p,2}, \ldots,D_{r,p,t}$ are adjacent  to $X_{r,p}$. For each $1 \leq s \leq t$,  we have $\psi(v) = \theta_{r,p}(s)$ for each vertex  $v \in V(D_{r,p,s})$. 
	In particular, $\psi(u_{r,p,s}) = \theta_{r,p}(s)$ and $u_{r,p,s}$ has $d$ neighbors in $D_{r,p,s}$ that are colored the same color as $u_{r,s,p}$. This implies that $\lambda_{H_{r,p}(t,d,k)}(u_{r,p,s},\psi)=0$ for $s=1,2,\ldots, t$. This is a contradiction, as  $\psi(u_{r,p,s}) = \theta_{r,p}(s) = \phi_{r,p}(u_{r,p,s})$, and by  Lemma \ref{lem-Htdk},  $\phi_{r,p}$ cannot be extended to a $d$-defective $L$-coloring of $H_{r,p}(t,d,k)$ with $\lambda_{H_{r,p}(t,d,k)}(u_{r,p,s},\psi)=0$ for $s=1,2,\ldots, t$.
	
	Lastly, we show that $G$ is $(k,d, \ell)$-choosable. Assume $L$ is any $k$-list assignment of $G$ with $  L(v) \subseteq [\ell]$. We shall show that $G$ is $d$-defective $L$-colorable.
	
	By Lemma~\ref{lem-max-degree},  there is a $d$-defective $L$-coloring $\phi$ of $G_1$.  By Lemma \ref{lem-extend}, $\phi$ can be extended to a $d$-defective $L$-coloring of $G_2$ (by setting $A=V(G_1)$ and $B=V(G_2)\setminus V(G_1)$). Similarly, by Lemma \ref{lem-extend}, we can extend $\phi$   to a $d$-defective $L$-coloring of $G_3$. Note that each vertex in $D_{i,j,s}$ has $(k-1)$ neighbors in $C_i$ and $d$ neighbors in $D_{i,j,s}$, so it also satisfies the inequality in Lemma~\ref{lem-extend} by setting $A=V(G_2)$ and $B=V(G_3)\setminus V(G_2)$. 
	
	It remains to show that we can extend $\phi$ to  $H_{i,j}(t,d,k)$ for $1 \le i \le k, 1 \le j \le q$.

	First assume that for any $u,v \in X_{i,j}$, $\phi(u) \neq \phi(v)$ and for any $x \in X_{i,j}$, $\lambda_{G_3}(x,\phi)=d$. In this case, there are only $\ell-(k-1)=t-1$ colors available for vertices in $\bigcup^t_{s=1}V(D_{i,j,s})$. So at least two of vertices in $T_{i,j}$ received the same color By Lemma~\ref{lem-Htdk}, we can extend $\phi$ to $H_{i,j}(t,d,k)$. In particular, if 
	$d=0$, then $\phi$ is a $0$-defective coloring, i.e., a proper coloring of $G_3$, implying that for any $u,v \in X_{i,j}$, $\phi(u) \neq \phi(v)$ and for any $x \in X_{i,j}$, $\lambda_{G_3}(x,\phi)=0=d$. Hence $\phi$ can be extended to $H_{i,j}(t,d,k)$.
	
	Next assume that $\phi(u) = \phi(v)$ for some $u,v \in X_{i,j}$ or  $\lambda_{G_3}(x,\phi) \leq d-1$ for some $x \in X_{i,j}$. In either case, it is enough  to modify $\phi$ on $D_{i,j,t}$ (if necessary) such that $\lambda_{G_3}(u_{i,j,t},\phi) \leq d-1 $. So it suffices to show that we can extend $\phi$ from $T$ to $H_{i,j}(t,d,k)$ such that $\lambda_{H_{i,j}(t,d,k)}(u_{i,j,s})=0$ for $s=1,2,\ldots,t-1$ and $\lambda_{H_{i,j}(t,d,k)}(u_{i,j,t})\leq 1$. By Lemma~\ref{lem-Htdk}, this is possible.  The proof of Theorem~\ref{main-th2} now is completed. \qed
	
	\medskip
	
	\section{Open problems}
	
	For $d \geq 0$, a graph  is $d$-defective $1$-choosable if and only if $\Delta(G) \leq d$, and which is equivalent to be  $d$-defective $1$-colorable. So  Question \ref{Q-K} has a positive answer for $k=1$ and $\ell_{1,d}=1$.
	For $k=2$ and $d=0$,   Kr\'{a}l' and Sgall \cite{KS2005} showed that every $(2,0,4)$-choosable graph is $(2,0,+\infty)$-choosable. The proof relies on a characterization of $2$-choosable graphs by \cite{ERT1980}. For $d \ge 1$, no characterization of $d$-defective $2$-choosable graphs is known, and Question \ref{Q-K} remains open for $k=2$ and $d \ge 1$.

	The following problem asked by Kr\'{a}l and Sgall \cite{KS2005}  also remains open.
	
	\begin{problem}
		Is it true that for each $k \geq 3$, there exists a number $\ell$ such that each $(k,0,\ell)$-choosable graph is $(k+1)$-choosable? If so, what is the least number $\ell$ with this property?
	\end{problem}

	
	\bibliographystyle{abbrv}
	\bibliography{reference}
	
\end{document}